\newtheorem{assumption}{Assumption}
\newtheorem{proposition}{Proposition}
\DeclareMathOperator*{\argmax}{arg\,max}
\DeclareMathOperator*{\argmin}{arg\,min}
\Crefname{equation}{Eq.}{Eqs.}
\crefname{equation}{Eq.}{Eqs.}
\Crefname{equation}{Eq.}{Eqs.}
\journal{Elsevier}
\begin{document}

\begin{frontmatter}

\title{Bi-fidelity Interpolative Decomposition for Multimodal Data}

\author[1]{Murray Cutforth}
\ead{mcc4@stanford.edu}
\author[2]{Tiffany Fan}
\ead{tiffan@stanford.edu}
\author[3]{Tony Zahtila}
\ead{tzahtila@stanford.edu}
\author[4]{Alireza Doostan\corref{mycorrespondingauthor}}
\cortext[mycorrespondingauthor]{Corresponding author}
\ead{doostan@colorado.edu}
\author[1]{Eric Darve}
\ead{darve@stanford.edu}
\affiliation[1]{address={Mechanical Engineering, Stanford University}}
\affiliation[2]{address={Computational and Mathematical Engineering, Stanford University}}
\affiliation[3]{address={Center for Turbulence Research, Stanford University}}
\affiliation[4]{address={Ann and H.J. Smead Department of Aerospace Engineering Sciences, University of Colorado, Boulder}}

\begin{abstract}

Multi-fidelity simulation is a widely used strategy to reduce the computational cost of many-query numerical simulation tasks such as uncertainty quantification, design space exploration, and design optimization. The reduced basis approach based on bi-fidelity interpolative decomposition is one such approach, which identifies a reduced basis, along with an interpolation rule in that basis, from low-fidelity samples to approximate the corresponding high-fidelity samples. However, as illustrated in the present study, when the model response is multi-modal and mode occupancy is stochastic, the assumptions underpinning this approach may not hold, thus leading to inaccurate estimates. We introduce the multi-modal interpolative decomposition method using bi-fidelity data, an extension tailored for this use case. Our work is motivated by a complex engineering application: a laser-ignited methane-oxygen rocket combustor evaluated over uncertain input parameters, exhibiting a bifurcation-like phenomenon in some regions of parameter space. Unlike the standard bi-fidelity interpolative decomposition approach, the proposed method can approximate a dataset of high-fidelity simulations for 16\% of the cost, while maintaining relatively high correlation (0.70--0.90) with parameter sensitivities.

\end{abstract}

\begin{keyword}
Bi-fidelity approximation \sep Interpolative decomposition \sep Uncertainty quantification \sep Low-rank
approximation
\end{keyword}

\end{frontmatter}

\section{Introduction}
\label{sec:introduction}

Computational cost is a ubiquitous limiting factor in the simulation of realistic physical and engineering systems, particularly when evaluated over a large input parameter space as is required in many-query tasks such as uncertainty quantification (UQ) \cite{doostan2009least,doostan2011non,zhang2021modern}, design optimization \cite{queipo2005surrogate}, and inverse problems \cite{hasanouglu2021introduction}. The strategy of multi-fidelity modeling \cite{peherstorfer2018survey, ng2012multifidelity, giles2015multilevel} has emerged to address this, where two or more different fidelity levels are combined, exploiting correlation between levels. Lower-fidelity in this context means a cheaper but less accurate model, such as using a data-driven surrogate model, reducing resolution, or using simplified governing equations. 

Among multi-fidelity UQ techniques, this work utilizes the bi-fidelity interpolative decomposition  (bi-fidelity ID) introduced in \cite{doostan2016bi,hampton2018}, a linear algebra-based technique closely related to \cite{doostan2007stochastic,narayan2014stochastic,newberry2022bi}. This approach identifies a reduced basis, along with an interpolation rule in that basis, from a collection of low-fidelity samples and approximates the high-fidelity output at the same set of parametric inputs using a small number of high-fidelity model evaluations. The bi-fidelity ID procedure is based on the assumption that an identical interpolation rule applies to both fidelity levels.
While successful in many single-mode contexts, \cite{fairbanks2020bi,fairbanks2017low,skinner2019reduced,pinti2022multi}, such an assumption can fail for multi-modal or bifurcating systems, where regions in the input space may exhibit qualitatively distinct solution ``clusters'' (or ``branches'') and cluster membership (branch occurrence) is stochastic. In these cases, naïvely transferring a single interpolation rule from the low-fidelity data may produce large approximation errors in the high-fidelity predictions.

This paper presents an extension of the bi-fidelity ID approach tailored for data arising from multi-modal or bifurcating responses driven by stochasticity. The main idea is to incorporate a mixture-model view of the low- and high-fidelity data, which allows conditional interpolation weights to be employed. By conditioning the high-fidelity interpolation on the actual cluster (or mode) identified in a small subset of high-fidelity data, one can avoid mismatches of the low-fidelity and high-fidelity basis elements that lead to large approximation errors. The method, termed \emph{multi-modal ID}, retains the efficiency of a column-based bi-fidelity approach while extending it to cases where distributions of the quantity of interest (QOI), conditioned on the realization of random inputs, may present multiple solution modes, each requiring a distinct cluster in the low-rank representation. The method description and numerical experiments consider the specific case of a bifurcating solution (two clusters); however, we note that the approach can be straightforwardly extended to more than two clusters.

We organize the paper as follows. After providing a survey of related bi-fidelity and low-rank approximation techniques, we present in Section \ref{sec:method} the existing bi-fidelity interpolative decomposition framework. Section \ref{sec:mmidmethod} then introduces the proposed multi-modal ID approach, including conditions under which it is valid and an algorithmic description. Numerical examples in Section \ref{sec:experiments} illustrate that the multi-modal ID drastically improves approximation quality in multi-modal scenarios.

\section{Related Work}
\label{sec:related-work}

\paragraph{Multi-fidelity surrogate modeling} 

The idea of exploiting multiple model fidelities for efficient uncertainty quantification has been extensively explored in the literature. Early techniques focused on co-kriging \cite{kennedy2000predicting,le2014recursive, le2013multi}. Another popular approach is polynomial chaos expansion, in which polynomial bases are combined across fidelities \cite{ng2012multifidelity,palar2016multi,newberry2022bi}.
More recently, neural network-based representations have become popular, including adaptations for multi-fidelity data such as the neural surrogate model in \cite{meng2020composite,de2020transfer,motamed2020multi,de2022neural}, and the bi-fidelity variational auto-encoder in \cite{cheng2024bi}. While neural network-based models are valuable tools, linear methods such as the proposed method still have their niche due to vastly reduced compute and data requirements.

\paragraph{Interpolative decomposition and low-rank approximation} ID is a low-rank matrix factorization which is distinguished by the fact that one factor consists of a subset of columns from the original data matrix, which is a useful feature in downstream tasks -- the truncated singular value decomposition, which provides the optimal reconstruction \cite{eckart1936approximation} does not have this attribute. There exists a large number of randomized algorithms for the efficient computation of the ID \cite{liberty2007randomized}, and this remains an area of active research, e.g., \cite{dong2023robust}.

In the multi-fidelity setting, the idea of utilizing a low-rank ID was introduced by Doostan \emph{et al.} \cite{doostan2016bi}. The multi-fidelity ID is closely related to the approaches presented in \cite{doostan2007stochastic,narayan2014stochastic,newberry2022bi}. In subsequent work \cite{hampton2018}, an {\it a posteriori} error estimate is derived to assess the utility of a pair of low- and high-fidelity models in producing accurate bi-fidelity data.

\paragraph{Stochastic extensions and mixture models} 

Several authors have investigated ways of blending clustering or classification ideas with low-dimensional subspace approximation. The clustered reduced-order modeling approach in \cite{kaiser2014cluster} decomposes the solution manifold into separate regions or modes, each with its own reduced basis. The classification and cluster-based basis function approach to reduced order modeling in \cite{xiong2022pre} is also similar. However, none of the above approaches leverages multi-fidelity data.

\section{Methodology}
\label{sec:method}

\subsection{Bi-fidelity ID}
\label{sec:bf_id}

We begin by restating the bi-fidelity ID algorithm introduced in \cite{doostan2016bi}. The quantity of interest (QOI) is denoted as $\boldsymbol v(\boldsymbol\xi) \in \mathbb{R}^{M}$, where $\boldsymbol \xi \in \mathbb{R}^d$ is a vector of random inputs with known probability density. We are interested in low-rank approximations to $\boldsymbol v(\boldsymbol\xi)$:
\begin{equation}
\label{eq:id}
    \boldsymbol v(\boldsymbol\xi) \approx \sum_{l=1}^r \boldsymbol v(\boldsymbol\xi_l) c_l(\boldsymbol\xi),
\end{equation}
where it is assumed that a rank $r\ll M$ representation produces an accurate estimate of $\bm{v}(\bm\xi)$. The approach is based on measuring the (discretized) QOI using a low- and a high-fidelity model, denoted by $\boldsymbol v_L(\boldsymbol\xi) \in \mathbb{R}^{M_L}$ and $\boldsymbol v_H(\boldsymbol\xi) \in \mathbb{R}^{M_H}$, respectively. Suppose that the low-rank approximation in (\ref{eq:id}) holds for both $\boldsymbol v_L(\boldsymbol\xi)$ and $\boldsymbol v_H(\boldsymbol\xi)$ with identical coefficients $c_l(\boldsymbol\xi)$. Then, the bi-fidelity approximation to $\boldsymbol v_H(\boldsymbol\xi)$ is obtained by using a low-fidelity dataset to compute the coefficients $c_l(\boldsymbol\xi)$ at $N$ parameter inputs $\{ \boldsymbol \xi_j \}_{j=1}^N$. The high-fidelity model is only evaluated at $r$ parametric inputs corresponding to the input samples $\{ \boldsymbol \xi_l \}_{l=1}^r$, $r \ll N$, identified by low-rank approximation of the low-fidelity data, as explained below. Then, the bi-fidelity approximation is:
\begin{equation}
\label{eq:id2}
    \boldsymbol v_H(\boldsymbol\xi_j) \approx \sum_{l=1}^r \boldsymbol v_H(\boldsymbol\xi_l) c_l(\boldsymbol\xi_j), \quad j=1, 2, ..., N.
\end{equation}
The assumption of identical $c_l(\boldsymbol\xi)$ in the approximation of $\boldsymbol v_L(\boldsymbol\xi)$ and $\boldsymbol v_H(\boldsymbol\xi)$ was discussed in \cite{hampton2018} and conditions on the low- and high-fidelity models were identified for it to hold. In particular, if the two fidelities are related through a linear operator, this approximation is exact given some assumptions discussed in Appendix \ref{app:linearop}.

We now provide further details on the implementation of this method. A summary is provided in Algorithm \ref{alg:deterministicid} of Appendix \ref{app:deterministicid}. 
The $M_L \times N$ low-fidelity data matrix $\bm L$ is the concatenation of the low-fidelity QOI $\boldsymbol v_L(\boldsymbol\xi)$ evaluated at $N$ parametric inputs $\{ \boldsymbol \xi_j \}_{j=1}^N$, i.e., $\bm{L}(:,j) = \bm{v}_L(\bm\xi_j)$. Our goal is to find an approximate factorization $\bm L \approx \bm L_r \bm C_L$ such that $\bm L_r$ is a subset of $r$ columns of $\bm L$, and $\bm C_L$ is the corresponding interpolation (coefficient) matrix. Following \cite{doostan2016bi}, the rank-revealing QR factorization can be applied
\begin{equation}
    \bm L \bm P \approx \bm Q_L \bm R_L,
\end{equation}
where $\bm P$ is an $N \times N$ permutation matrix. By partitioning $\bm R_L$ into a $r \times r$ upper triangular matrix $\bm R_L^{(11)}$ and a $r \times (N-r)$ matrix $\bm R^{(12)}$, we obtain
\begin{equation}
    \bm L \approx \bm Q_L \bm R_L^{(11)} \left[ \bm I \ | \ \left(\bm R_L^{(11)}\right)^{-1} \bm R_L^{(12)} \right]\bm P^T := \bm L_r \bm C_L,
\end{equation}
where $\bm L_r:=\bm Q_L \bm R_L^{(11)}$ is the $M_L \times r$ skeleton matrix consisting of $r$ columns of $\bm L$. 
Here, the $r$ entries in each column of $\bm C_L$ correspond to the $r$ coefficients $c_l(\bm \xi)$ in (\ref{eq:id}) for a specific $\bm \xi$.
In practice, the rank-revealing QR algorithm is iterated with increasing $r$ until $\| \bm L - \bm L_r \bm C_L \|_F$ reaches a pre-specified accuracy, where $\| \cdot\|_F$ denotes the Frobenius norm. 
The $r$ columns of $\bm L$, which are selected in $\bm L_r$ each correspond to a different parametric input to the low-fidelity model. This set of $r$ input parameter values, $\{ \boldsymbol \xi_l \}_{l=1}^r$, defines the low-rank basis used to reconstruct the full data matrix. From (\ref{eq:id2}), the bi-fidelity approximation of $\boldsymbol v_H(\boldsymbol\xi_j)$ is obtained by evaluating the high-fidelity model at the parametric inputs identified in the basis, $\{ \boldsymbol \xi_l \}_{l=1}^r$, to form 
\begin{equation}
\label{eq:hr}
    \bm H_r := \left[ \boldsymbol v_H(\boldsymbol\xi_1) \ \boldsymbol v_H(\boldsymbol\xi_2) \ \cdots \ \boldsymbol v_H(\boldsymbol\xi_r) \right].
\end{equation}
Then, the coefficients computed from the low-fidelity dataset are used to reconstruct the full high-fidelity data matrix:
\begin{equation}
    \bm H \approx \widehat{\bm H} := \bm H_r \bm C_L,
\end{equation}
thus obtaining predicted high-fidelity realizations at the $M - r$ parameter instances for which only the low-fidelity model was evaluated. An {\it a posteriori} error estimate for the bi-fidelity solution $ \widehat{\bm H}$ along with conditions on the pair of low- and high-fidelity data for an accurate $\widehat{\bm H}$ are presented in \cite{hampton2018}.

\subsection{Multi-modal data driven by stochasticity}
\label{}

We now consider a case for the QoI $\boldsymbol v(\boldsymbol\xi)$ specified by the following two assumptions:
\begin{assumption}
    For a fixed $\bm\xi$, $\boldsymbol v(\boldsymbol\xi)$ is a multivariate random vector. That is, in addition to $\bm\xi$, $\boldsymbol v$ depends on some random variables $\bm\omega$ which may be hidden (unknown).\\[-.2cm]
\end{assumption}

\begin{assumption}
    The joint distribution of $\boldsymbol v(\boldsymbol\xi)$ conditioned on $\bm\xi$ is multi-modal. Stated differently, for a given $\bm\omega$, the realizations of $\boldsymbol v(\boldsymbol\xi)$ are clustered. This implies that the joint probability density of $\boldsymbol v(\boldsymbol\xi)$ conditioned on $\bm\xi$, denoted by $f(\bm v | \bm \xi)$, can be written as a mixture of component densities
\begin{equation}
\label{eq:mixturedist}
 f(\bm v | \bm \xi) = \sum_{k=1}^K \pi_k(\bm \xi) f_k(\bm v; \bm \xi), \quad \sum_{k=1}^K \pi_k(\bm \xi)=1,
\end{equation}
where $f_k(\bm v; \bm \xi)$ is the probability density of the $k$-th mode (cluster), parameterized by $\bm \xi$. 
\end{assumption}

Reintroducing the low-rank approximation of (\ref{eq:id}) results in
\begin{equation}
    \label{eq:randomid}
    \boldsymbol v(\boldsymbol\xi, \bm \omega) \approx \sum_{l=1}^r \boldsymbol v(\boldsymbol\xi_l, \bm \omega_l) c_l(\boldsymbol\xi, \bm \omega),
\end{equation}
where, as before, the choice of basis input samples $\bm \xi_l$ and interpolation weights $c_L$ are implicitly dependent on a collection of training data. In the bi-fidelity case, $\bm v_L(\bm \xi)$ and $\bm v_H(\bm \xi)$ are independent multivariate random variables and are assumed to be distributed according to a probability density of the form in (\ref{eq:mixturedist}). A naive application of the bi-fidelity procedure described in Section \ref{sec:bf_id} yields
\begin{align}
    & \boldsymbol v_L(\bm \xi, \bm \omega) \approx \sum_{l=1}^r \boldsymbol v_L(\bm \xi_l, \bm \omega_l^L) c_l(\bm \xi, \bm \omega), \\
    & \boldsymbol v_H(\bm \xi, \bm \omega) \approx \sum_{l=1}^r \boldsymbol v_H(\bm \xi_l, \bm \omega_l^H) c_l(\bm \xi, \bm \omega) \label{eq:randomid2}.
\end{align}
However, because $\bm \omega_l^L$ and $\bm \omega_l^H$ are independent realizations, they may have different cluster membership, meaning that re-using identical $c_l$ in the approximation of (\ref{eq:randomid2}) may be inappropriate. The basic assumption of our method is as follows.
\begin{assumption}
    If the cluster membership of $\bm \omega_l^L$ matches that of $\bm \omega_l^H$, then (\ref{eq:randomid2}) still holds.
\end{assumption}

In this work, we propose a procedure whereby $\bm \omega_l^L$ and $\bm \omega_l^H$ are guaranteed to have matching cluster membership. Our algorithm, denoted as the \textit{multi-modal ID}, is described next. Although the method can tolerate a small difference in cluster probability between fidelities, the final bi-fidelity approximation uses only samples from the low-fidelity model to infer cluster probabilities used to produce predictions.

\subsection{Multi-modal ID method}
\label{sec:mmidmethod}

In the standard bi-fidelity ID, the low-fidelity model is evaluated at $N$ input samples $\{ \boldsymbol \xi_j \}_{j=1}^N$. In the stochastic multi-modal case, we sample $\bm v_L(\bm\xi_j)$ input $N_S$ times, giving a total of $N \cdot N_S$ low-fidelity evaluations. $N_S$ can be chosen to sufficiently sample the distribution of $\boldsymbol v_L(\boldsymbol\xi_j)$. We now have a set of $N_S$ realizations of the $M_L \times N$ low-fidelity data matrix. The modified ID approach consists of three steps: basis selection, sampling of high-fidelity, and computation of the interpolation weights. An algorithmic description of the multi-modal ID method is provided in Algorithm \ref{alg:1}, Appendix \ref{app:stochid}.

\paragraph{Basis selection} 

In the baseline ID method, the basis is found via a column-pivoted rank-revealing QR factorization of $\bm L$, $\bm L \bm P \approx \bm Q_L \bm R_L$. Then the basis column index set
\begin{equation}
\label{eq:J}
    \mathcal{J} = \left\{ \left. \argmax_i P_{ij} \right \rvert j = 1, 2, \dots, r \right\}.
\end{equation}

In the stochastic case, $\bm L$ is a random matrix; therefore, the optimal basis is given by minimizing the expected approximation error over the distribution of $\bm L$, i.e.,
\begin{align}
\label{eq:optimalj}
    \mathcal{J} &= \argmin_{\mathcal{J}} \ \mathbb{E}\left[ \| \bm L - \bm L(:, \mathcal{J}) \bm C(\bm L, \mathcal{J}) \|_F^2 \right], \\ \bm C(\bm L, \mathcal{J}) &= \argmin_{\bm C} \| \bm L - \bm L(:, \mathcal{J}) \bm C \|_F^2, \label{eq:clstsq}
\end{align}
where $\bm L(:, \mathcal{J})$ is the basis and $\bm C(\bm L, \mathcal{J})$ is the interpolation matrix obtained via the solution of a matrix least squares problem. Equation (\ref{eq:optimalj}) describes a stochastic combinatorial optimization problem, where the size of the search space is $N \choose r$. We experimented with two basis selection algorithms, which both operate on $N_S$ samples of $\bm L$, with $\bm L^{(i)}$ denoting the $i$-th sample.

\begin{enumerate}
    \item A column-pivoted QR based on vertical stacking of samples of $\bm L$. Then the rank-$r$ column-pivoted QR decomposition is
    \begin{equation}
        \begin{bmatrix}
             \bm L^{(1)} \\
             \bm L^{(2)} \\
             \vdots \\ 
             \bm L^{(N_S)}
        \end{bmatrix} \bm P \approx \bm Q_L \bm R_L.
    \end{equation}
    The basis index set is given by (\ref{eq:J}) as before. Recall that $\bm L^{(i)} \in \mathbb{R}^{M_L \times N}$, and, thus, the complexity of this approach is $\mathcal{O} \left(r \cdot N_S \cdot M_L \cdot N \right)$.

\item A sample average approximation (SAA) approach \cite{kleywegt2002sample} in which (\ref{eq:optimalj}) is converted to the SAA problem
\begin{equation}
\label{eq:saa}
     \argmin_{\mathcal{J}} \ \frac{1}{N_S} \sum_{i=1}^{N_S} \| \bm L^{(i)} - \bm L^{(i)}(:, \mathcal{J}) \bm C^{(i)}(\bm L^{(i)}, \mathcal{J}) \|_F^2.
\end{equation}
The problem in (\ref{eq:saa}) is still intractable, and due to the non-convexity of the objective function, we solve this optimization problem numerically using a simulated annealing algorithm \cite{van1987simulated}. 

The cost function (denoted by $\ell(\mathcal{J})$) is given by the right hand side of (\ref{eq:saa}), with $\bm C(\bm L, \mathcal{J})$ the solution of the matrix least squares problem in  (\ref{eq:clstsq}). The search space is explored by swapping a single randomly-chosen column index at iteration $k$, and then accepting the change with probability
\[
\begin{cases}
1, & \text{if } \ell(\mathcal{J}_{k+1}) < \ell(\mathcal{J}_{k}),\\[4pt]
\exp\!\left(-\dfrac{\ell(\mathcal{J}_{k+1}) - \ell(\mathcal{J}_{k})}{T_{k}}\right), & \text{otherwise},
\end{cases}
\]
where $T_{k}$ is a temperature parameter which varies according to a preset schedule. This procedure is repeated for multiple random initial guesses. An additional regularization heuristic used here is that synthetic samples of $\bm L$ are bootstrapped by randomly combining columns from the original samples (because columns are independently distributed), leading to the cost function
\begin{equation}
    \ell(\mathcal{J}) = \frac{1}{N_b} \sum_{i=1}^{N_b} \| \bm L^{(i)} - \bm L^{(i)}(:, \mathcal{J}) \bm C^{(i)}(\bm L^{(i)}, \mathcal{J}) \|_F^2.
\end{equation}

The simulated annealing algorithm introduces three new parameters which affect the complexity: the number of iterations $N_{\rm iter}$, the number of restarts $N_{\rm restart}$, and the number of bootstrap samples $N_b$. The overall complexity of the algorithm is $\mathcal{O} \left( N_{\rm iter} \cdot N_{\rm restart} \cdot N_b \cdot (r^2 \cdot M_L + r \cdot M_L \cdot N + r^2 \cdot N )\right)$, because the inner matrix least squares solution is $\mathcal{O} \left( r^2 \cdot M_L + r \cdot M_L \cdot N + r^2 \cdot N \right)$ using a QR decomposition of $\bm L(:, \mathcal{J})$. However, because in general $N_{\rm iter} \propto N$ the simulated annealing algorithm is less scalable than column stacked QR.

\end{enumerate}

The two approaches are compared numerically in Section \ref{sec:exp:basis}. Unless otherwise specified, we use the simulated annealing algorithm for basis selection.

\paragraph{Sample the high-fidelity} This step is unchanged in the multi-modal version of the algorithm. As before, the high-fidelity model is evaluated at only $r$ parametric inputs $\{ \boldsymbol \xi_l \}_{l \in \mathcal{J}}$. These samples are assembled into the $M_H \times r$ matrix $\boldsymbol H_r$ (restating equation \ref{eq:hr} using the notation of $\mathcal{J}$ for the basis column index set): 
\begin{equation}
    \boldsymbol H_r = \left[ \bm v(\bm \xi_{\mathcal{J}[1]}) \ \bm v(\bm \xi_{\mathcal{J}[2]}) \ \cdots \ \bm v(\bm \xi_{\mathcal{J}[r]})\right].
\end{equation}

\paragraph{Compute interpolation weights} The interpolation weights $\bm C_L$ are combined with the high-fidelity basis to obtain the final bi-fidelity approximation $\widehat{\bm H} = \bm H_r \bm C_L$. A single sample of $\bm C_L$ is obtained by first sampling from $\bm L$, conditioned on the basis columns agreeing with the cluster membership of the high-fidelity basis. This sample is obtained by:
\begin{enumerate}
    \item For each of the non basis columns ($j\notin \mathcal{J}$), randomly select one of the $N_S$ samples of this column.
    \item For each basis column ($j \in \mathcal{J}$), randomly select a sample of this column from samples where the cluster membership agrees with the cluster membership of the corresponding high-fidelity basis column.
\end{enumerate}
Denoting this sample as $\tilde{\bm L}$, then a sample of the interpolation weight matrix is obtained through the solution of the regularized least squares problem:
\begin{equation}
    \argmin_{ \bm C_L} \| \tilde{\bm L} - \tilde{\bm L}(:, \mathcal{J}) \bm C_L \|_F^2 + \lambda \| \bm C_L \|_F^2.
\end{equation}
This step is repeated, providing multiple samples of $\widehat{\bm H}$. The variation in these samples reflects the variation in the interpolation rule, due to the variation in $\bm L$. The parameter $\lambda$ is set to 1, with data scaled to $\mathcal{O}(10)$ in all numerical experiments.

\subsection{Illustrative toy example}
\label{sec:failure}

\begin{figure}[b!]
    \centering
    \includegraphics[width=\linewidth]{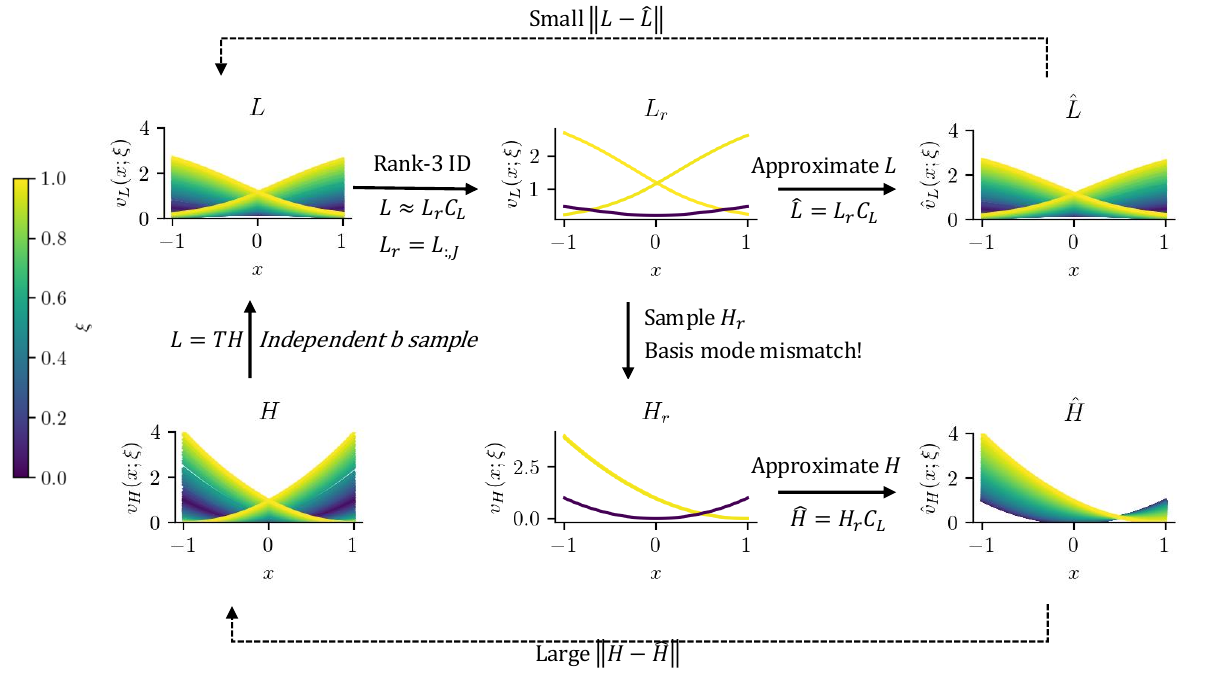}
    \caption{Toy example illustrating the failure of the baseline bi-fidelity ID method on stochastic multi-modal data. The true data matrices $\bm L$ and $\bm H$ are illustrated on the left. In practice, we only observe $\bm L$. The rank-3 ID of $\bm L$ identifies the 3 basis columns illustrated in $\bm L_r$. When $\bm H_r$ is sampled, the basis modes do not match (two of the $v_H(x;\xi)$ samples in $\bm H_r$ are overlapping), leading to the incorrect prediction of $\widehat{\bm H}$ despite the accurate approximation of $\bm L$.
    }   
    \label{fig:toy}
\end{figure}

We now illustrate the failure of the standard bi-fidelity ID using a toy example. Each realization of the high-fidelity QoI, $v_H(x; \xi)$, follows the simple bi-modal discrete distribution with quadratic dependence on a single scalar parameter $\xi$:
\begin{equation}
v_H(x; \xi, \omega) = 
\begin{cases}
(x - \xi)^2 & \text{if } \omega = 0 \\
(x + \xi)^2 & \text{if } \omega = 1,
\end{cases}
\label{eq:toy}
\end{equation}
where \( \omega \sim \text{Bernoulli}(\frac{1}{2}) \). We draw 100 samples of $\xi$ following $U(0, 1)$, one for each column of $\bm H$, and evaluate $x$ at 100 equi-spaced points in $[0,1]$. In this case, $\mathrm{rank}(\bm H) = 3$. Also, note that in practice our method does not require any knowledge of the random variable $\omega$ which drives the multi-modality.

The low-fidelity data is obtained by an independent sampling of $\omega$, followed by convolution with a Gaussian kernel with $\sigma=\frac{1}{2}$, represented by the linear operator $\bm T$ defined as
\begin{equation}
\label{eq:T}
    T_{ij} = \frac{1}{Z_i} \exp\left( -\frac{(x_i - x_j)^2}{2 \sigma^2} \right),
\end{equation}
where $Z_i$ is the row-normalization factor.
Since $\bm T$ is invertible, in the absence of stochasticity due to the independent sampling of $\omega$, the bi-fidelity approximation of $\bm H$ is exact. See Appendix \ref{app:linearop} for further remarks on the accuracy of bi-fidelity ID when $\bm L=\bm T \bm H$. 

However, Figure \ref{fig:toy} illustrates how this procedure fails in the face of a bi-modal data distribution due to the independent sampling of $\omega$. The key point at which the method breaks down is in the sampling of the high-fidelity basis $\bm H_r$. These columns are sampled independently of the low-fidelity basis, and if their cluster membership is not identical, then the interpolation rule $\bm C_L$ will not be appropriate.  

Furthermore, the probability of a basis mismatch converges to 1 as $r$ increases under an assumption on the mixture distribution.

\begin{proposition}
For mixture distributions with constant mixture weights 
\begin{align}
    & f_L(\bm v_L | \bm \xi) = \sum_{k=1}^K \pi_k f_{L,k}(\bm v_L; \bm \xi), \\
    & f_H(\bm v_H | \bm \xi) = \sum_{k=1}^K \pi_k f_{H,k}(\bm v_H; \bm \xi),
\end{align}
with $\sum_{k=1}^K\pi_k=1$, the probability of basis mismatch, $P(MM)$, is bounded from below by $1 - (\max_k \pi_k)^r$ and bounded from above by $1 - (\min_k \pi_k)^r$.
\end{proposition}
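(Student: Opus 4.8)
The plan is to reduce a basis mismatch to $r$ independent per-column label comparisons, compute the single-column agreement probability in closed form, and then sandwich that probability between $\min_k \pi_k$ and $\max_k \pi_k$.

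First I would formalize the mismatch event. For each basis column $l \in \{1,\dots,r\}$, the hidden variable $\bm\omega_l^L$ assigns the low-fidelity sample a cluster label $Z_l^L$ drawn from the categorical law with weights $(\pi_1,\dots,\pi_K)$, and independently $\bm\omega_l^H$ assigns the high-fidelity sample a label $Z_l^H$ from the same law. Since the mixture weights are identical across the two fidelities and constant in $\bm\xi$, and the hidden variables across distinct columns are independent, the agreement events $\{Z_l^L = Z_l^H\}_{l=1}^r$ are independent and identically distributed. A basis mismatch ($MM$) is precisely the complement of the event that all $r$ columns agree.

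Next I would evaluate the per-column agreement probability. By independence of $Z_l^L$ and $Z_l^H$,
\[
    p := P\!\left(Z_l^L = Z_l^H\right) = \sum_{k=1}^K P\!\left(Z_l^L = k\right) P\!\left(Z_l^H = k\right) = \sum_{k=1}^K \pi_k^2 .
\]
The crucial elementary bound is the sandwich $\min_k \pi_k \le \sum_{k=1}^K \pi_k^2 \le \max_k \pi_k$, obtained by summing the pointwise inequalities $\pi_k \min_j \pi_j \le \pi_k^2 \le \pi_k \max_j \pi_j$ and using $\sum_k \pi_k = 1$. Thus $p \in [\min_k \pi_k, \max_k \pi_k]$. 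Finally I would combine independence with the monotonicity of $t \mapsto t^r$: the probability that every basis column agrees is $p^r$, so $P(MM) = 1 - p^r$, and raising the sandwich to the $r$-th power gives $(\min_k \pi_k)^r \le p^r \le (\max_k \pi_k)^r$, hence $1 - (\max_k \pi_k)^r \le P(MM) \le 1 - (\min_k \pi_k)^r$, which is the claim.

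I do not expect a substantive obstacle: the statement is a short probability computation. The only point needing care is the probabilistic modeling step — justifying that the constant-weight hypothesis permits treating each column's label as an i.i.d.\ categorical draw and that the low- and high-fidelity draws are independent — after which the two bounds follow immediately from the sandwich inequality and monotonicity. A minor remark worth including is that when $K \ge 2$ and the mixture is nondegenerate ($\max_k \pi_k < 1$), both bounds tend to $1$ as $r \to \infty$, recovering the stated convergence of $P(MM)$ to $1$.
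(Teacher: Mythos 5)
Your proposal is correct and follows essentially the same route as the paper's proof: decompose the mismatch into $r$ independent per-column agreement events, bound the single-column matching probability between $\min_k \pi_k$ and $\max_k \pi_k$, and take products and complements. The only difference is that you make the matching probability explicit as $\sum_{k=1}^K \pi_k^2$ and actually prove the sandwich bound, whereas the paper simply asserts $\min_k \pi_k \le P(M_i) \le \max_k \pi_k$; your version is the more complete of the two.
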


\begin{proof}
    $P(MM) = 1 - P(MM^C)$, i.e., the probability of a basis vector mismatch is 1 - (probability of the basis matching). Each basis vector, indexed from $1$ to $r$, is independent. Call the probability of the $i$-th basis element matching $P(M_i)$. Then $P(MM^C) = \prod_{i=1}^rP(M_i)$.
    In a mixture with constant weights $\pi_k$, the probability that a basis element matches is bounded by $\min_k \pi_k \leq P(M_i) \leq \max_k \pi_k$. Thus, $(\min_k \pi_k)^r \leq P(MM^C) \leq (\max_k \pi_k)^r$. Taking complements gives $1 - (\max_k \pi_k)^r \leq P(MM) \leq 1 - (\min_k \pi_k)^r$.
\end{proof}

Basis mismatch is therefore highly likely as $r$ increases for a uniform mixture distribution.

\subsection{Convergence of binary mixture probability}

We now show the convergence rate of the predicted mixture probability obtained with the multi-modal ID method for the case of a binary mixture ($K=2$). This is a property of nested binomial processes, as explained below.
To simplify the exposition, consider a single non-basis column $i$ whose true mixture probability is denoted by $\pi$ (so that $\pi_1 + \pi_2 = 1$ and $\pi = \pi_1$). We generate $N_S$ samples of this column under the low-fidelity model and record the success proportion 
\[
    p_1 \;\sim\; \frac{\mathrm{Binomial}(N_S, \pi)}{N_S}.
\]
Then, using the procedure of Section~\ref{sec:mmidmethod}, we create $S$ realizations of the predicted high-fidelity data matrix $\widehat{\bm{H}}$. The mixture probability in these $S$ realizations is observed as
\[
    \hat{\pi} \;\sim\; \frac{\mathrm{Binomial}(S, p_1)}{S}.
\]
This nesting of binomial processes allows us to make the following statement about the unbiasedness of $\hat{\pi}$ and the variance of our estimator.

\begin{proposition}

The multi-modal ID estimator $\hat{\pi}$ is unbiased, i.e., $\mathbb{E}[\hat{\pi}] \;=\; \pi$,
and its variance is $\mathrm{Var}(\hat{\pi}) = \pi\,(1-\pi)\,\frac{N_S + S - 1}{N_S\,S}$.
Consequently, $\mathrm{Var}(\hat{\pi}) \to 0$ as $N_S, S \to \infty$.
\end{proposition}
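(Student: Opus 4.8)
The plan is to exploit the nested (compound) binomial structure directly via the laws of total expectation and total variance, conditioning on the intermediate quantity $p_1$. The key observation is that $\hat\pi \mid p_1 \sim \mathrm{Binomial}(S, p_1)/S$, while $p_1 \sim \mathrm{Binomial}(N_S, \pi)/N_S$, so everything reduces to combining the elementary moments of a scaled binomial, namely $\mathbb{E}[\mathrm{Binomial}(n,p)/n] = p$ and $\mathrm{Var}(\mathrm{Binomial}(n,p)/n) = p(1-p)/n$.

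For unbiasedness, I would first compute the inner conditional expectation $\mathbb{E}[\hat\pi \mid p_1] = p_1$, then take the outer expectation to get $\mathbb{E}[\hat\pi] = \mathbb{E}[p_1] = \pi$. This is immediate and should be stated in one or two lines.

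For the variance, the natural tool is the law of total variance,
\begin{equation}
\label{eq:totvar}
\mathrm{Var}(\hat\pi) = \mathbb{E}\bigl[\mathrm{Var}(\hat\pi \mid p_1)\bigr] + \mathrm{Var}\bigl(\mathbb{E}[\hat\pi \mid p_1]\bigr).
\end{equation}
The second term is simply $\mathrm{Var}(p_1) = \pi(1-\pi)/N_S$. For the first term I would write $\mathrm{Var}(\hat\pi \mid p_1) = p_1(1-p_1)/S$ and then take its expectation, which requires $\mathbb{E}[p_1(1-p_1)] = \mathbb{E}[p_1] - \mathbb{E}[p_1^2]$. Here I use $\mathbb{E}[p_1^2] = \mathrm{Var}(p_1) + (\mathbb{E}[p_1])^2 = \pi(1-\pi)/N_S + \pi^2$, giving $\mathbb{E}[p_1(1-p_1)] = \pi(1-\pi)(1 - 1/N_S) = \pi(1-\pi)(N_S-1)/N_S$. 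Substituting both terms into \eqref{eq:totvar} and combining over the common denominator $N_S S$ yields
\begin{equation}
\mathrm{Var}(\hat\pi) = \frac{\pi(1-\pi)(N_S-1)}{N_S\,S} + \frac{\pi(1-\pi)}{N_S} = \pi(1-\pi)\,\frac{N_S + S - 1}{N_S\,S},
\end{equation}
as claimed, and the limit as $N_S, S \to \infty$ follows since the prefactor tends to zero.

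The only mild subtlety — and the step I would be most careful about — is the expectation of the quadratic term $\mathbb{E}[p_1(1-p_1)]$, since a naive substitution of $\mathbb{E}[p_1]$ into $p_1(1-p_1)$ would incorrectly ignore the variance contribution and spoil the $(N_S-1)$ factor; correctly accounting for $\mathbb{E}[p_1^2] \neq (\mathbb{E}[p_1])^2$ is what produces the exact finite-sample coefficient. Otherwise the argument is a routine application of standard binomial moment identities, and no further machinery is needed.
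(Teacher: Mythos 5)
Your proposal is correct and follows exactly the paper's own argument: the law of total expectation for unbiasedness and the law of total variance conditioned on $p_1$ for the variance, with the only added value being that you write out the computation of $\mathbb{E}[p_1(1-p_1)] = \pi(1-\pi)(N_S-1)/N_S$ explicitly, a step the paper leaves implicit. No gaps.
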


\begin{proof}
First, using the law of total expectation:
\begin{equation}
    \mathbb{E}[\hat{\pi}] = \mathbb{E}[\mathbb{E}[\hat{\pi} | p_1]] = \pi.
\end{equation}
And similarly, the law of total variation:
\begin{align}
    \textrm{Var}(\hat{\pi}) =& \ \mathbb{E}[\textrm{Var}(\hat{\pi} | p_1)] + \textrm{Var}(\mathbb{E}[\hat{\pi} | p_1]) \\
     =& \ \mathbb{E} \left[ \frac{p_1 (1 - p_1)}{S} \right] + \textrm{Var}(p_1) \\ 
     =& \ \pi (1 - \pi) \frac{N_S + S -1}{N_S S}.
     \label{eq:var_pi}
\end{align}

As $N_S, S \to \infty$, this variance tends to zero. In the special case $N_S = S$, the variance scales on the order of $1/S$.
\end{proof}

\section{Experiments and Discussion}
\label{sec:experiments}

In the following section, we present numerical experiments in which we quantify various aspects of the proposed method. Synthetic problems are introduced in Section \ref{sec:toy_probs}. The two basis selection approaches which we have developed for the multi-modal ID method are then quantified in Section \ref{sec:exp:basis}. The performance of the multi-modal and baseline ID methods is compared in Section \ref{sec:exp:comparison}. Finally, the proposed method is applied to sensitivity analysis in a stochastic multi-physics problem in Section \ref{sec:exp:combustor}.

\subsection{Synthetic Toy Problems}
\label{sec:toy_probs}

We compare the multi-modal ID method against the baseline ID method on a series of three synthetic problems with progressively increasing complexity.
In each toy problem, the number of low-fidelity data matrix samples, $N_S$, is set to 10. 

\paragraph{A. Discrete Distribution with Quadratic Dependence on $\xi$}
This problem serves as a pure test of the algorithm's ability to handle stochasticity that leads to a bi-modal distribution, isolated from low-rank approximation errors.
\begin{itemize}
    \item The data is drawn directly from the bimodal distribution in (\ref{eq:toy}), sampling $\omega$ independently for each fidelity:
    \begin{align}
        & \bm H = \left[ \bm v_H(\xi_1, \omega_1) \ \cdots \ \bm v_H(\xi_{N}, \omega_{N}) \right], \\
        & \bm L = \bm T \left[ \bm v_H(\xi_1, \omega_1') \ \cdots \ \bm v_H(\xi_{N}, \omega_{N}') \right].
    \end{align}
    \item The low-fidelity model is given by the convolution of the high-fidelity model with a Gaussian kernel, as specified in (\ref{eq:T}).
    \item A 1D parameter $\xi$ is sampled uniformly in $[0,1]$ to generate the 100 columns of the data matrix.
\end{itemize}

\paragraph{B. Pitchfork Bifurcation ODE}
This is a classic bifurcating dynamical system.
\begin{itemize}
    \item The initial value problem is given by:
    \begin{equation}
        \frac{dx}{dt} = \xi x - x^3, \quad x(0) \sim 10^{-3} \times \textrm{Uniform}\{-1, 1\},
    \end{equation}
    This sends trajectories to one of two stable equilibria for $\xi > 0$ with equal probability.
    \item High-fidelity trajectories were obtained using the RK5(4) method \cite{dormand1980family} with relative error tolerance of $10^{-6}$. Low-fidelity trajectories were obtained using the RK3(2) method with relative error tolerance of $10^{-2}$.
    \item The bifurcation parameter $\xi$ is sampled uniformly in $[0, 5]$.
\end{itemize}

\paragraph{C. Stochastic Lotka-Volterra ODE}
This problem represents a more complex, multi-parameter scenario. The Lotka-Volterra equations are an idealized model of predator-prey interactions. A stochastic bi-modal outcome is introduced by sampling the four parameters of the ODE system $(\alpha, K, \beta, \gamma)$ from a two-component mixture distribution. 
\begin{itemize}
    \item The initial value problem is given by
    \begin{align}
\frac{dx}{dt} =& \alpha x \left(1 - \frac{x}{K}\right) - \beta x y \\ 
\frac{dy}{dt} =& \beta x y - \gamma y, \\
x(0), & \ y(0) = 0.5,
\end{align}
where $x$ and $y$ represent the prey and predator populations, respectively. The parameters $\alpha$, $K$, $\beta$, and $\gamma$ are dependent on $\bm\xi$ and are sampled from the discrete mixture distribution
\begin{equation}
\label{eqn:mix_lotka}
\begin{pmatrix}
    \alpha \\
    K \\
    \beta \\
    \gamma
\end{pmatrix}
\sim \phi(\bm\xi)
\begin{pmatrix}
    0.5 \\
    5 \\
    2 + 0.5(\xi_1 + \xi_2) \\
    1
\end{pmatrix}
+ (1 - \phi(\bm\xi))
\begin{pmatrix}
    0.5 \\
    10 \\
    0.25(\xi_1 + \xi_2) \\
    1
\end{pmatrix}.
\end{equation}
In \eqref{eqn:mix_lotka}, the mixture function $\phi(\bm\xi)$ is defined as a sigmoid transformation of a hyperplane in the $\bm\xi$-space: $ \phi(\bm\xi) = \frac{1}{1 + \bm n \cdot \bm\xi - m}$,
where $\bm n = [10, 10] / \sqrt{2}$, and $m=5\sqrt{2} - 0.5$. The model depends on input parameters $\boldsymbol{\xi} = (\xi_1, \xi_2)$ sampled uniformly in the unit square $[0,1] \times [0,1]$. This system can be interpreted as predator-prey dynamics across a randomly varying landscape.

    \item The high-fidelity solution is obtained by a numerical solution up to time $t=50$ using the RK5(4) method with relative error tolerance of $10^{-3}$. The low-fidelity solution is obtained by an independent sampling of the parameters, followed by a convolution of the high-fidelity solution with a Gaussian profile with standard deviation of $\frac{10}{3}$.
\end{itemize}

\subsection{Basis selection}
\label{sec:exp:basis}

We first present experimental results on the basis selection sub-step of the algorithm.
Figure \ref{fig:exp:basis} shows a comparison between the two basis selection approaches. We measure the scaling of $l_2$ reconstruction error as a function of rank, for different values of $N_L$ (number of samples of the random matrix) and data matrix size (square matrices are sampled for this experiment). The performance of the vertical stacking approach degrades as $N_L$ increases, unlike the simulated annealing approach. The simulated annealing basis selection approach is more accurate for the regime relevant to our applications ($N_L \approx 10$, $r \approx 10$), and is used in the following numerical experiments.

The quadratic bimodal problem was excluded as the data matrix is trivially reconstructed for ranks greater than 3.

\begin{figure}
\begin{subfigure}[b]{0.99\textwidth}
        \centering
        \includegraphics[width=\textwidth]{./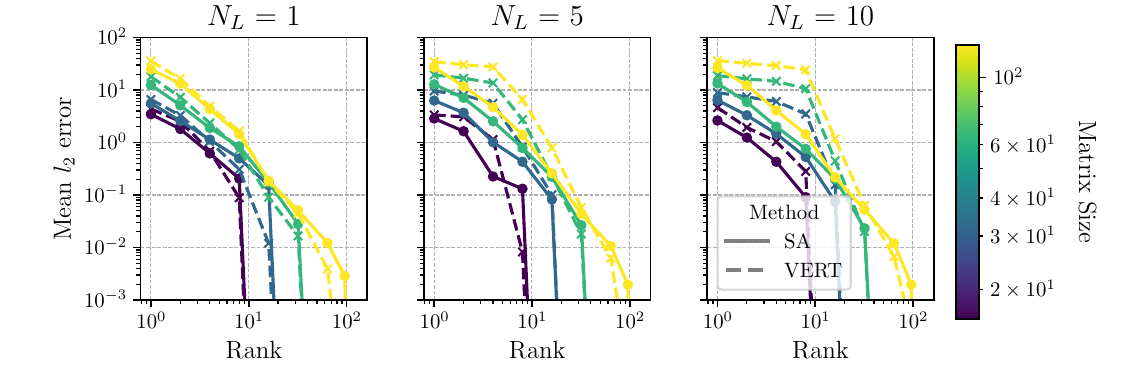}
        \caption{}
        \label{fig:basis:1}
    \end{subfigure}
    
    \begin{subfigure}[b]{0.99\textwidth}
        \centering
        \includegraphics[width=\linewidth]{./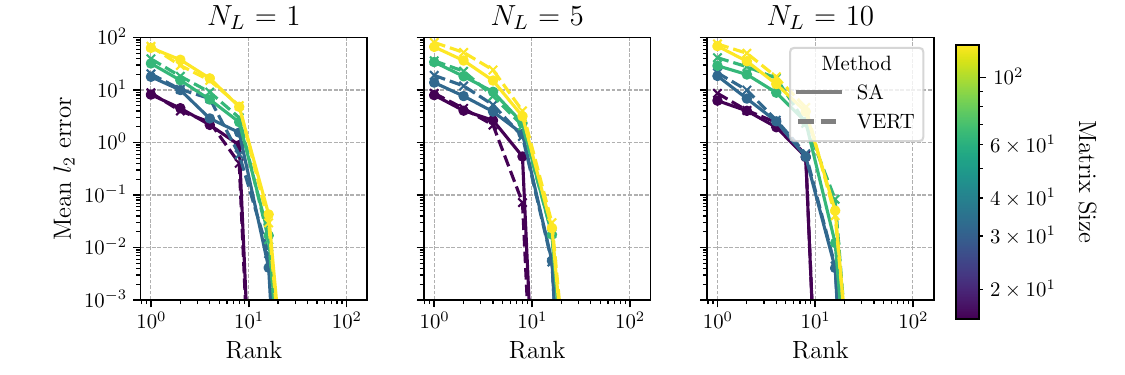}
        \caption{}
        \label{fig:basis:2}
    \end{subfigure}

    \caption{Comparison of proposed basis selection approaches on two synthetic toy problems: \ref{fig:basis:1} pitchfork bifurcation, and \ref{fig:basis:2} spatially varying Lotka-Volterra. Color denotes the size of the data matrix. In the legend, \texttt{SA} is the simulated annealing algorithm, and \texttt{VERT} is the vertically-stacked QR algorithm. }
    \label{fig:exp:basis}
\end{figure}

\subsection{Accuracy comparison on toy problems}
\label{sec:exp:comparison}

We now compare the bi-fidelity approximation accuracy of the multi-modal ID method against the baseline bi-fidelity ID method on the synthetic toy problems defined above.
In all synthetic problems, a rank-5 interpolative decomposition is applied to a $100 \times 100$ data matrix. 

Due to the random nature of the true high-fidelity matrix $\bm H$, the Wasserstein-1 metric is used. We measure $W_1(\bm H, \widehat{\bm H})$ for 50 samples of the flattened matrices $(\bm H, \widehat{\bm H})$ and report the histogram of this distance metric for each method.

The results from the three toy problems are provided in Figures \ref{fig:quadratic}, \ref{fig:pitchfork} and \ref{fig:lv}. Each panel shows the predicted distributions for six randomly selected columns of the high-fidelity data matrix for visualization purposes, along with $W_1(\bm H, \widehat{\bm H})$.

\begin{figure}[htb]
    \begin{subfigure}[b]{0.32\textwidth}
        \centering
        \includegraphics[trim={0 0 3.8cm 0},clip,width=\textwidth]{./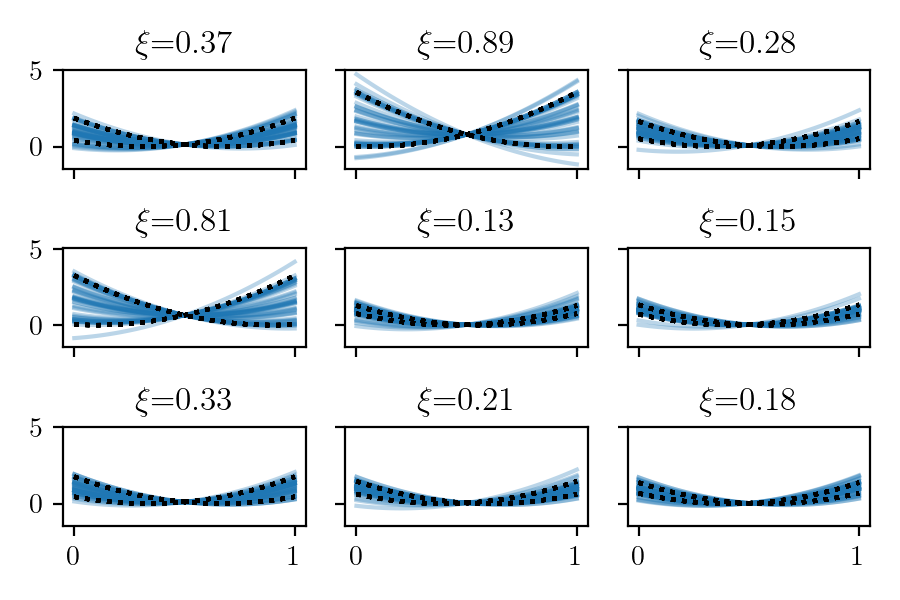}
        \caption{}
        \label{fig:quadratic:detidcols}
    \end{subfigure}
    \begin{subfigure}[b]{0.32\textwidth}
        \centering
        \includegraphics[trim={0 0 3.8cm 0},clip,width=\textwidth]{./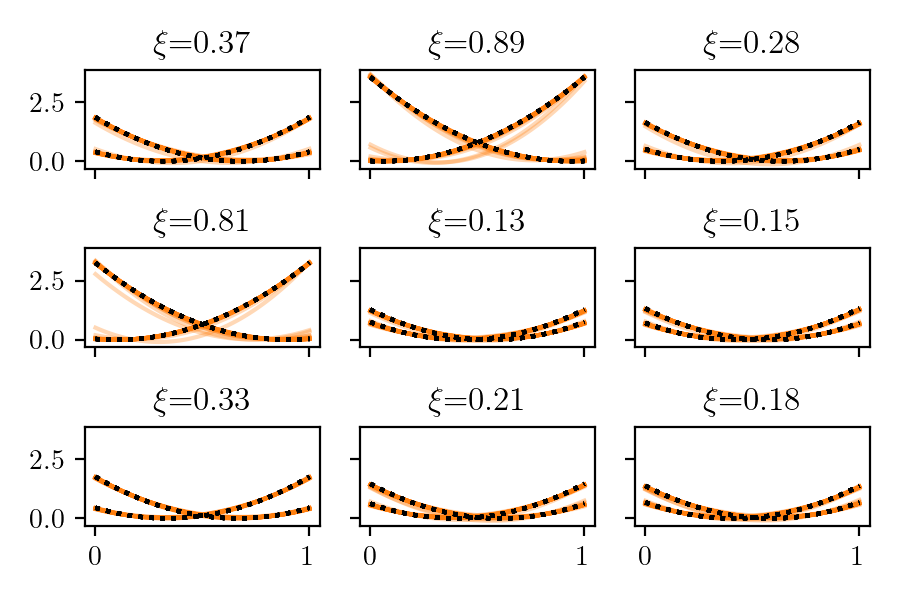}
        \caption{}
        \label{fig:quadratic:stochidcols}
    \end{subfigure}
    \begin{subfigure}[b]{0.32\textwidth}
        \centering
        \includegraphics[width=\linewidth]{./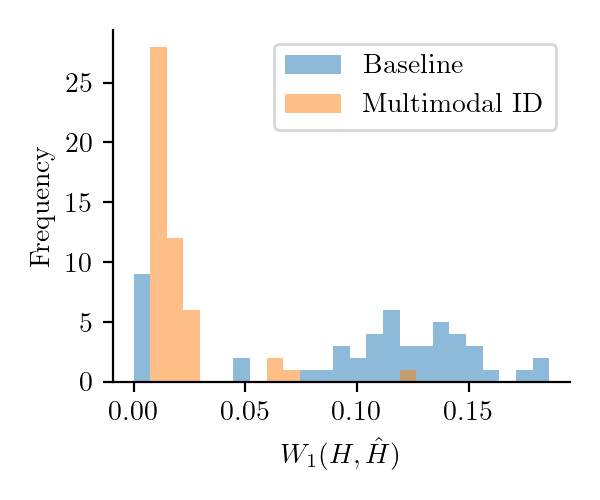}
        \caption{}
        \label{fig:quadratic:w1}
    \end{subfigure}

    \caption{Quadratic bi-modal problem. In \ref{fig:quadratic:detidcols} and \ref{fig:quadratic:stochidcols} a visualization of 100 predictions of six randomly-chosen columns are shown (dotted line = true solution, colored lines = sampled predictions), where each prediction is obtained from an independent sampling of $\bm{L}$ and $\bm{H}_r$. In \ref{fig:quadratic:w1}, the probability of a given average $W_1$ distance is compared between approaches.}
    \label{fig:quadratic}

\end{figure}

\begin{figure}[htb]
    \begin{subfigure}[b]{0.32\textwidth}
        \centering
        \includegraphics[trim={0 0 3.8cm 0},clip,width=\textwidth]{./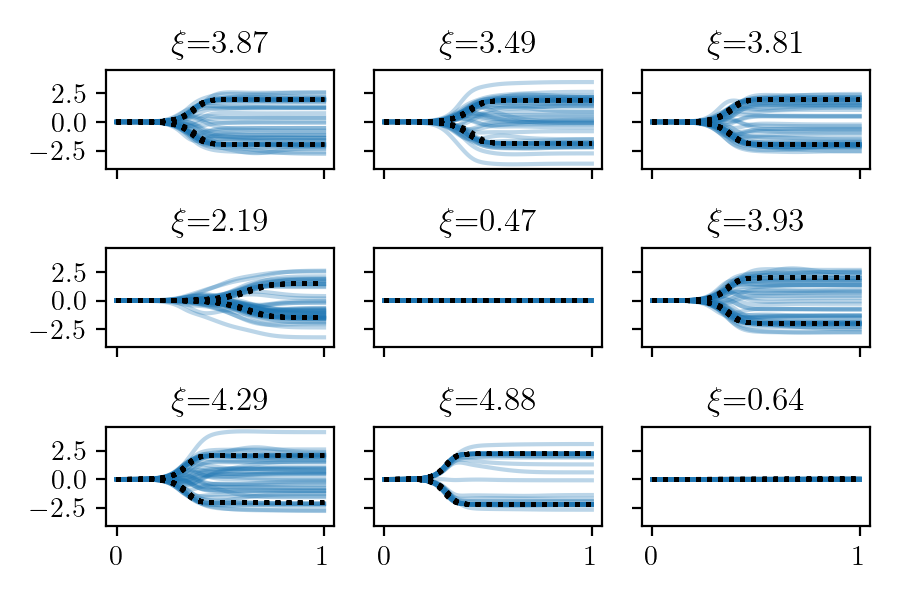}
        \caption{}
        \label{fig:pitchfork:detidcols}
    \end{subfigure}
    \begin{subfigure}[b]{0.32\textwidth}
        \centering
        \includegraphics[trim={0 0 3.8cm 0},clip,width=\textwidth]{./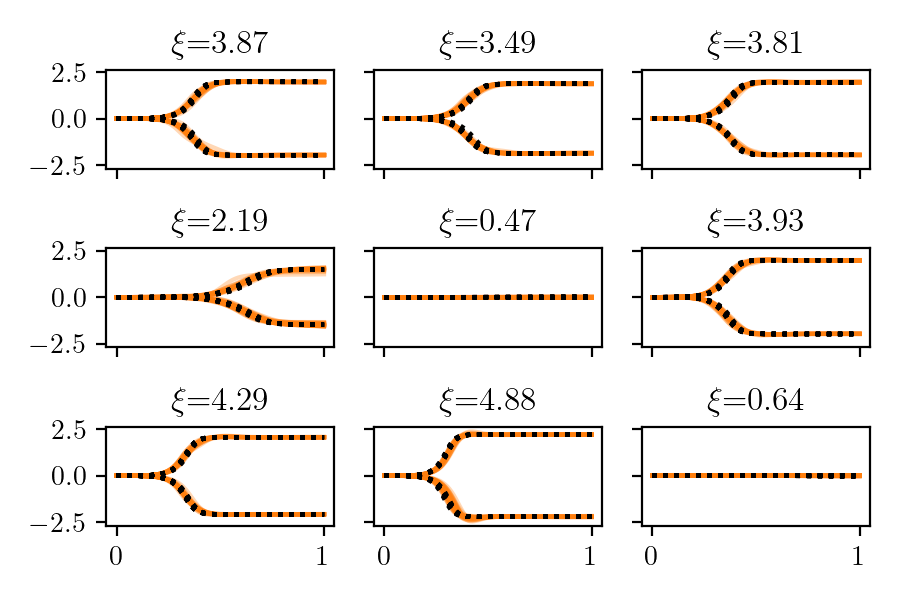}
        \caption{}
        \label{fig:pitchfork:stochidcols}
    \end{subfigure}
    \begin{subfigure}[b]{0.32\textwidth}
        \centering
        \includegraphics[width=\linewidth]{./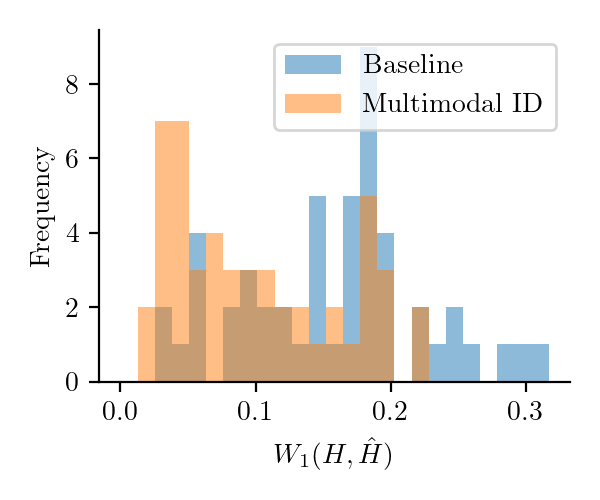}
        \caption{}
        \label{fig:pitchfork:w1}
    \end{subfigure}

    \caption{Pitchfork bifurcation ODE. In \ref{fig:pitchfork:detidcols} and \ref{fig:pitchfork:stochidcols} a visualization of 100 predictions of six randomly-chosen columns are shown (dotted line = true solution, colored lines = sampled predictions), where each prediction is obtained from an independent sampling of $\bm{L}$ and $\bm{H}_r$. In \ref{fig:pitchfork:w1}, the probability of a given average $W_1$ distance is compared between approaches.}
    \label{fig:pitchfork}

\end{figure}

\begin{figure}[htb]
\centering
    \begin{subfigure}[b]{0.32\textwidth}
        \centering
        \includegraphics[trim={0 0 3.8cm 0},clip,width=\textwidth]{./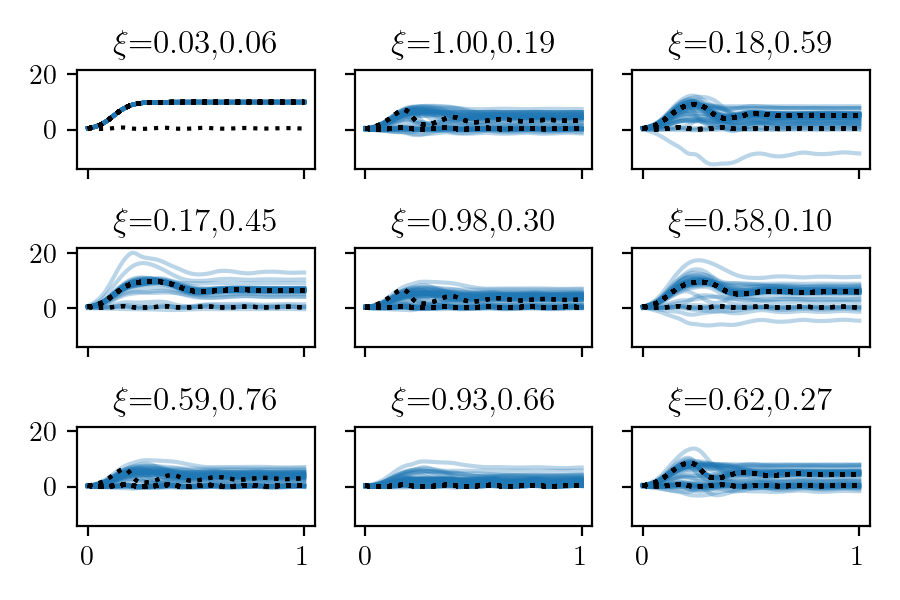}
        \caption{}
        \label{fig:lv:detidcols}
    \end{subfigure}
    \begin{subfigure}[b]{0.32\textwidth}
        \centering
        \includegraphics[trim={0 0 3.8cm 0},clip,width=\textwidth]{./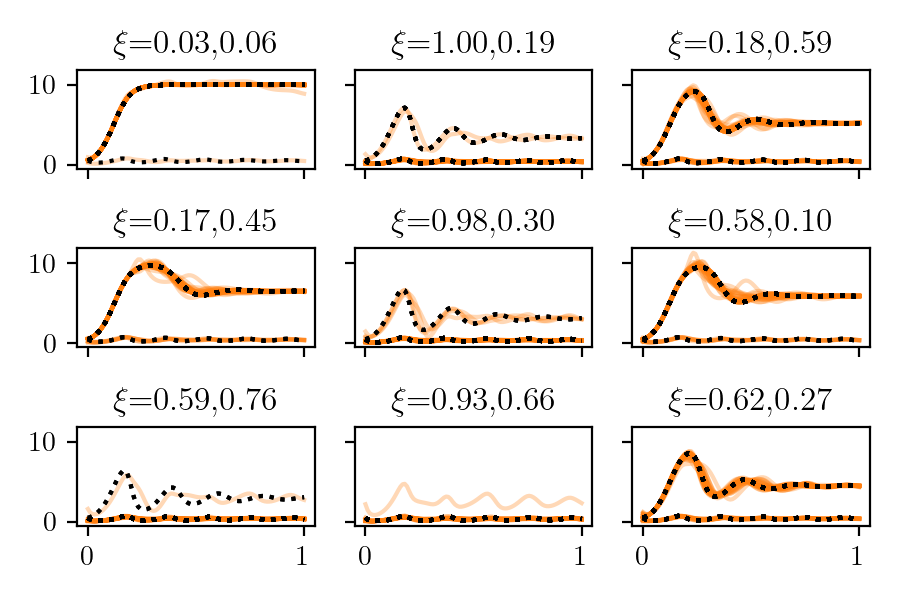}
        \caption{}
        \label{fig:lv:stochidcols}
    \end{subfigure}
    \begin{subfigure}[b]{0.32\textwidth}
        \centering
        \includegraphics[width=\linewidth]{./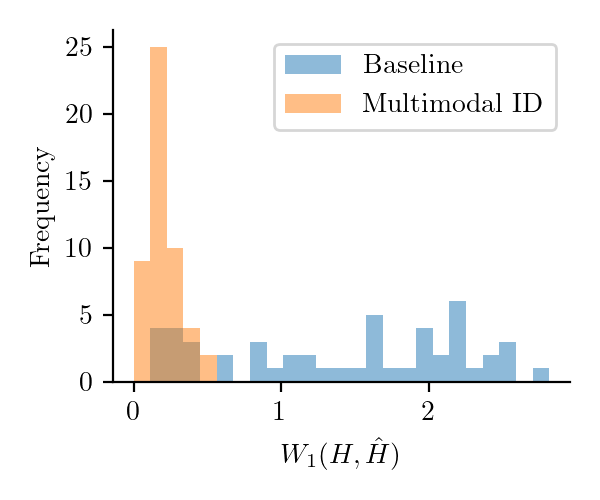}
        \caption{}
        \label{fig:lv:w1}
    \end{subfigure}

    \caption{Stochastic Lotka-Volterra ODE. In \ref{fig:lv:detidcols} and \ref{fig:lv:stochidcols} a visualization of 100 predictions of six randomly-chosen columns are shown (dotted line = true solution, colored lines = sampled predictions), where each prediction is obtained from an independent sampling of $\bm{L}$ and $\bm{H}_r$. In \ref{fig:lv:w1}, the probability of a given average $W_1$ distance is compared between approaches.}
    \label{fig:lv}

\end{figure}

\paragraph{Discussion of synthetic problems}

All three synthetic problems exhibit similar results. The ID algorithm incorrectly blends modes of the solutions, while the multi-modal ID algorithm correctly distinguishes between solution branches, even if they are imperfectly reconstructed due to the low-rank approximation. The $W_1$ distance plots show that for a given realization of the data, multi-modal ID is consistently able to provide a closer match to the true HF data matrix distribution.

In these problems, distinguishing between modes is trivially achieved by thresholding, and we note that this proposed approach is only possible if, for each fidelity, the cluster/mode membership can be identified, and the same set of clusters exists for both fidelities. Furthermore, in these synthetic problems, the cluster probability is identical across fidelities, which is another assumption made by our method. Small differences in cluster probability are tolerated, however.

\subsection{Bi-fidelity approximation of laser-ignited rocket combustor}
\label{sec:exp:combustor}

\paragraph{Introduction to laser-ignited rocket combustor application}

Laser-induced spark is an ignition method in a rocket combustor that facilitates engine re-ignition throughout a mission. In the present system, a circular jet of oxygen with an annular flow of methane is considered. A laser is focused just outside the shear layer, and this injects sufficient energy to ignite the system. However, run-to-run variabilities in the system arise due to interactions with the turbulent shear layer of the methane-oxygen jet. This has been confirmed experimentally in a subscale rocket combustor (Purdue Propulsion, \cite{strelau2023modes}), where reliability maps have been produced experimentally by repeating the same nominal experiment. This motivates an understanding of the distribution of possible outcomes from the system.

\paragraph{Multi-fidelity methodology}

The Hypersonics Task-based Research (HTR) solver described in \citet{di2020htr} for the compressible chemically reacting Navier-Stokes equations is used to carry out simulations.  The computational details concerning the solver parallelism, simulation domain, boundary conditions, and numerics can be found in previous work as part of PSAAP-III \citep{wang2021progress, passiatore2024computational, zahtila2025bi}. Note that the laser is modeled by an energy source term imposed on the fluid.

Two mesh resolutions are employed in this study. The low-fidelity mesh is composed of 2M grid points, and the high-fidelity counterpart is composed of 15M grid points. Both meshes are illustrated in Figure \ref{fig:mesh}. The mesh resolution differs by approximately a factor of 2. The same sub-grid scale models are employed in both cases \citep{smagorinsky1963general}. A fully resolved direct numerical simulation would require approximately 100 billion grid points by comparison, but satisfactory agreement has been achieved between measured quantities in experiments and output statistics from the present resolution used in high-fidelity simulations. The chemistry in the low-fidelity simulations is simplified to a 5-species, 3-step mechanism. 

Simulations were carried out on a cluster consisting of 4 Nvidia V100 GPUs per node. Each simulation on the 15M mesh took approximately 128 GPU-hours, while the 2M mesh used approximately 8 GPU-hours, a cost ratio of approximately 16.

\begin{figure}[htb]
\centering
    \begin{subfigure}[b]{0.9\textwidth}
        \centering
        \includegraphics[width=\textwidth]{./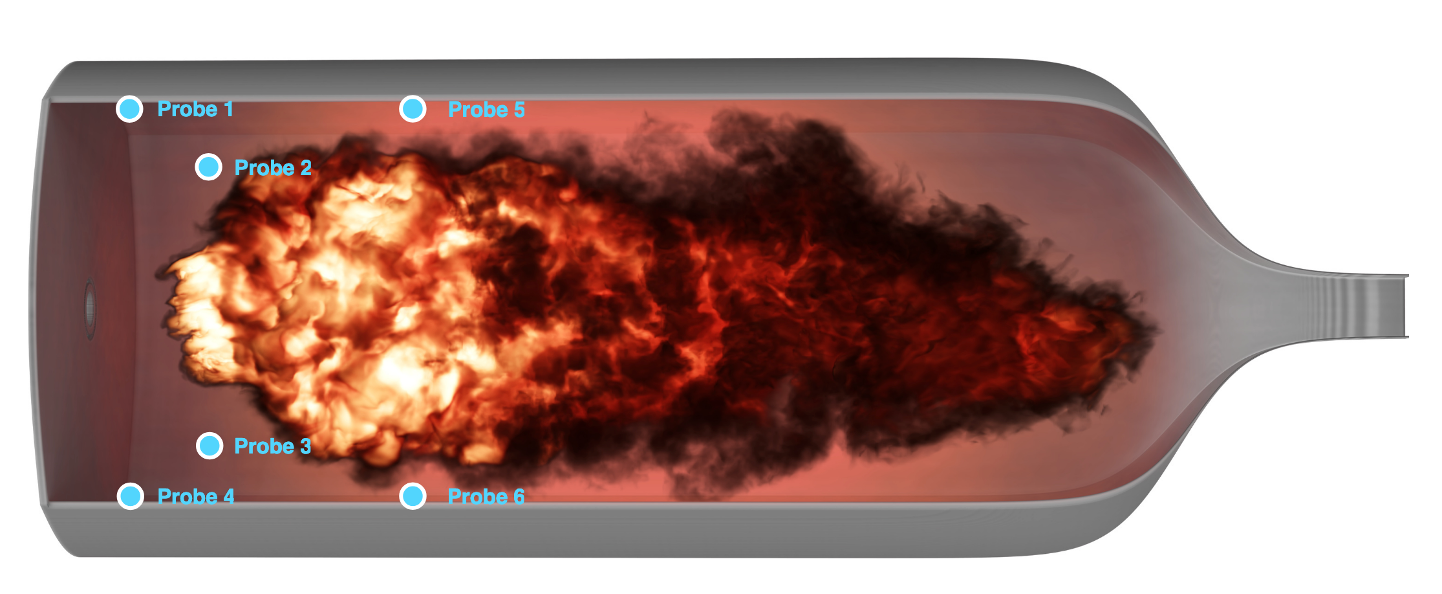}
        \caption{}
        \label{fig:psaap:diego}
    \end{subfigure}
    
    \begin{subfigure}[b]{0.99\textwidth}
        \centering
        \includegraphics[width=\textwidth]{./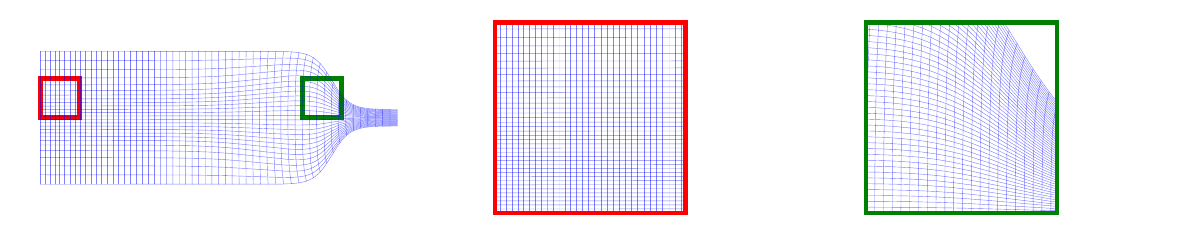}
        \caption{}
        \label{fig:psaap:meshlf}
    \end{subfigure}
    
    \begin{subfigure}[b]{0.99\textwidth}
        \centering
        \includegraphics[width=\linewidth]{./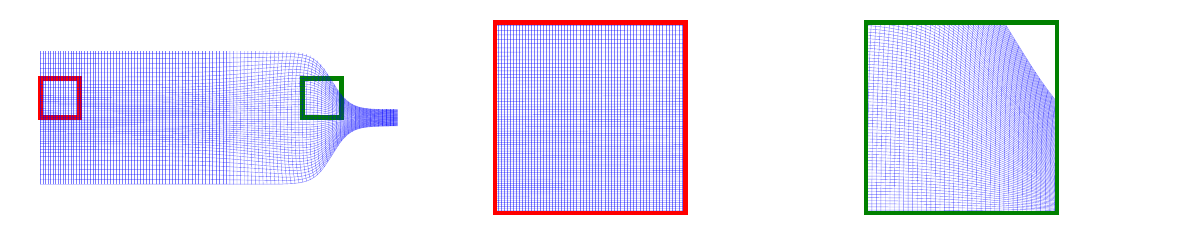}
        \caption{}
        \label{fig:psaap:meshhf}
    \end{subfigure}

    \caption{\ref{fig:psaap:diego} shows a render of temperature in the laser-ignited combustor problem with the location of the six pressure probes. \ref{fig:psaap:meshlf}: illustration of slice from low-fidelity mesh. \ref{fig:psaap:meshhf}: illustration of slice from high-fidelity mesh. In the left part of \ref{fig:psaap:meshlf} and \ref{fig:psaap:meshhf}, the resolution is reduced by a factor of 4 for clarity.}
    \label{fig:mesh}

\end{figure}

\paragraph{Uncertainty quantification problem}

We are interested in quantities associated with the pressure trace obtained from the average of the six probe locations inside the combustor, illustrated in Figure \ref{fig:mesh}. We simulate the pressure over 500 $\mu$s post-laser, and focus on a sensitivity analysis of the average pressure over this time period.

The present uncertainties are informed by both experimentally measured quantities and domain knowledge of the physics that govern the breakdown of the laser deposition kernel. A full and dedicated account of the present uncertainties is given in \citep{zahtilaprogress}. A summary of the resultant uncertainty distributions is presented in Table \ref{tab:uncertainties}, and simulations are run by random sampling of the uncertain inputs $\bm{\xi}$. We run simulations at both fidelities for 237 different samples of our 12-dimensional uncertainty vector. Our objective is the approximation of the high-fidelity data matrix using a bi-fidelity method. This approximated data matrix can then be used in a downstream uncertainty quantification task, such as sensitivity analysis.

Our goal is to compare sensitivity analysis results obtained through the multi-modal ID bi-fidelity approximation with sensitivity analysis results on the full high-fidelity data matrix. We apply three sensitivity indices implemented in SALib \cite{Herman2017}, suitable for pre-existing parameter samples, RBD-FAST \cite{tarantola2006random}, PAWN \cite{pianosi2015simple}, and Delta moment-independent analysis \cite{plischke2013global}.

\begin{figure}[htb]
\centering
    \begin{subfigure}[b]{0.49\textwidth}
        \centering
        \includegraphics[width=\textwidth]{./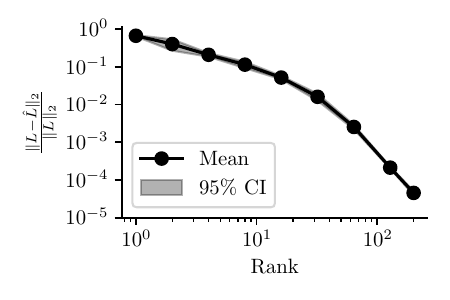}
        \caption{}
        \label{fig:psaap:0}
    \end{subfigure}
    \begin{subfigure}[b]{0.49\textwidth}
        \centering
        \includegraphics[width=\textwidth]{./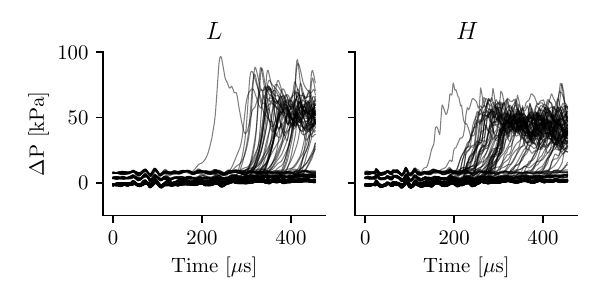}
        \caption{}
        \label{fig:psaap:1}
    \end{subfigure}
    
    \begin{subfigure}[b]{0.49\textwidth}
        \centering
        \includegraphics[width=\textwidth]{./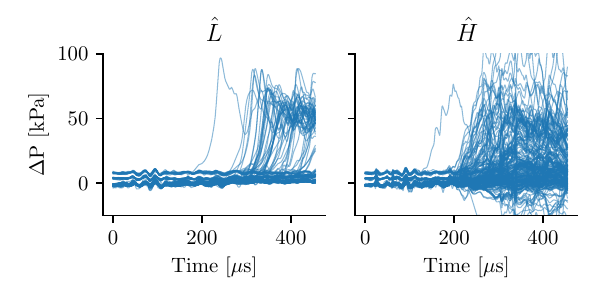}
        \caption{}
        \label{fig:psaap:2}
    \end{subfigure}
    \begin{subfigure}[b]{0.49\textwidth}
        \centering
        \includegraphics[width=\linewidth]{./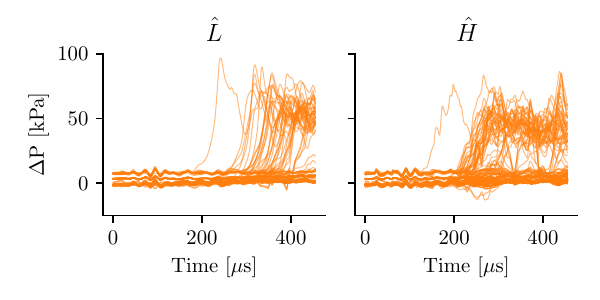}
        \caption{}
        \label{fig:psaap:3}
    \end{subfigure}

    \caption{In \ref{fig:psaap:0} the reconstruction error of the low-fidelity data matrix as a function of rank is shown, with confidence intervals computed from 5 samples of the forward model. The remaining plots illustrate all 237 pressure traces that make up the data matrix. \ref{fig:psaap:1} shows one sample of the true low- and high-fidelity models. \ref{fig:psaap:2} shows the rank 25 approximation of $\bm L$ and $\bm H$ from the baseline ID method, and \ref{fig:psaap:3} shows a realization of the corresponding predictions from the multi-modal ID method. Compare the predictions in \ref{fig:psaap:3} with one sample of the true data matrices in \ref{fig:psaap:1}.}
    \label{fig:psaap_h_hat}
\end{figure}

\begin{figure}[htb]
    \centering
    \includegraphics[width=\textwidth]{./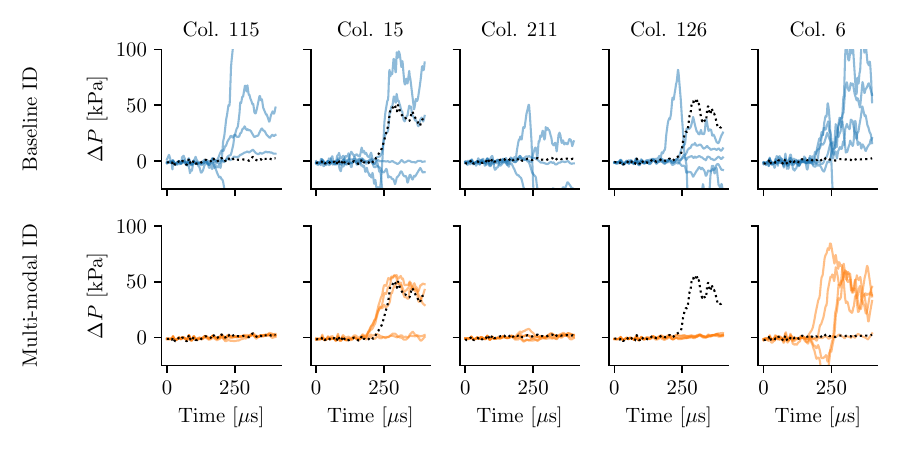}
    \caption{A comparison of predictions from multi-modal ID (bottom row) and baseline ID (top row) for 5 randomly-chosen columns of the data matrix. Five samples are obtained from each method via repeated samples of the low-fidelity model. The black dashed line shows a single realization of the high-fidelity model. The multi-modal ID predictions are generally bimodal, whereas baseline ID fails to give realistic pressure trace predictions.}
    \label{fig:psaap:errobar}
\end{figure}

\begin{figure}[htbp]
\centering
    \begin{subfigure}[b]{0.69\textwidth}
        \centering
        \includegraphics[width=\textwidth]{./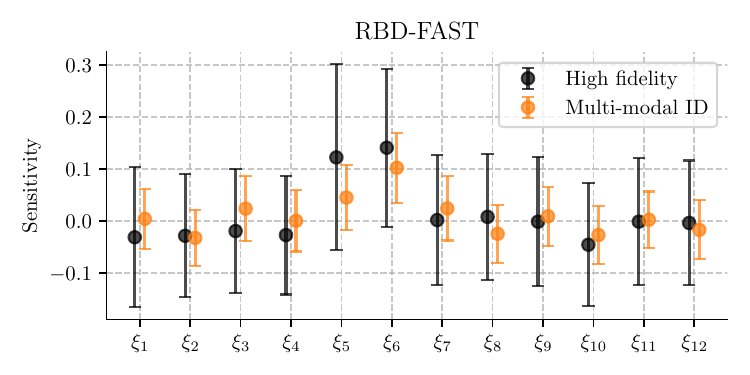}
        \caption{}
        \label{fig:sens:1}
    \end{subfigure}
    \begin{subfigure}[b]{0.29\textwidth}
        \centering
        \includegraphics[width=\textwidth]{./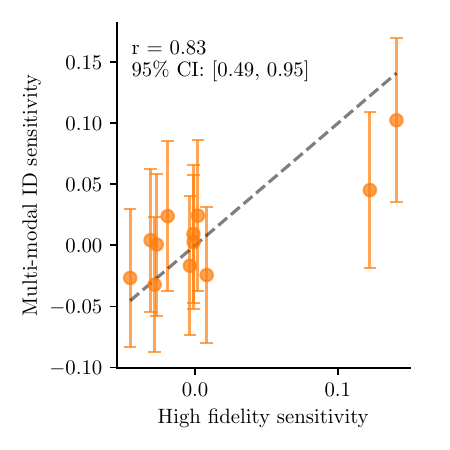}
        \caption{}
        \label{fig:sens:2}
    \end{subfigure}

    \begin{subfigure}[b]{0.69\textwidth}
        \centering
        \includegraphics[width=\textwidth]{./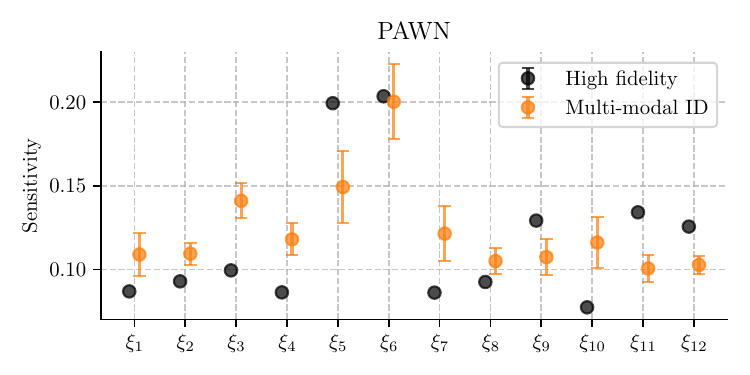}
        \caption{}
        \label{fig:sens:3}
    \end{subfigure}
    \begin{subfigure}[b]{0.29\textwidth}
        \centering
        \includegraphics[width=\textwidth]{./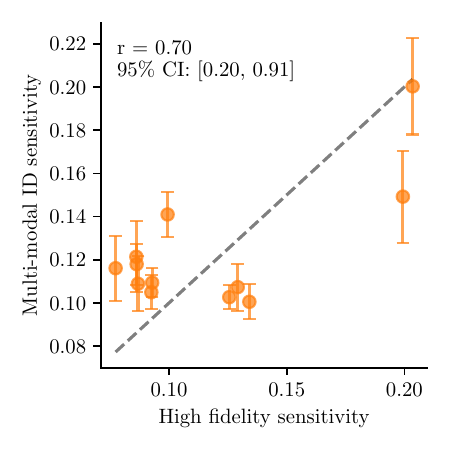}
        \caption{}
        \label{fig:sens:4}
    \end{subfigure}

    \begin{subfigure}[b]{0.69\textwidth}
        \centering
        \includegraphics[width=\textwidth]{./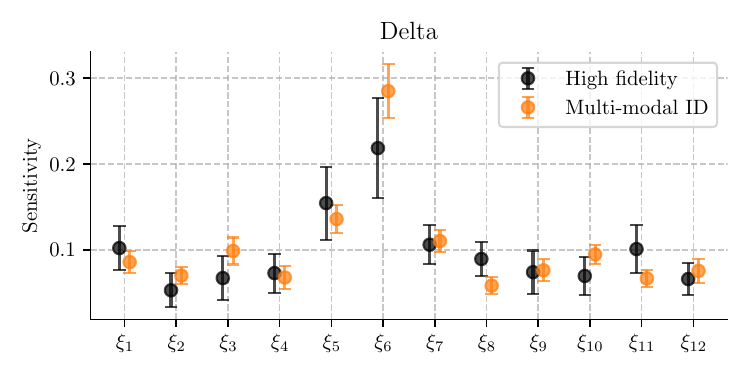}
        \caption{}
        \label{fig:sens:5}
    \end{subfigure}
    \begin{subfigure}[b]{0.29\textwidth}
        \centering
        \includegraphics[width=\textwidth]{./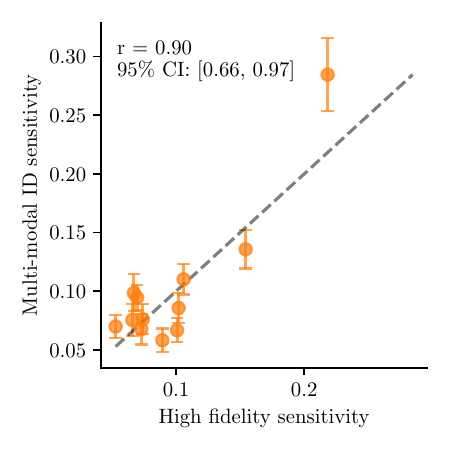}
        \caption{}
        \label{fig:sens:6}
    \end{subfigure}

    \caption{Sensitivity of mean pressure to simulation parameters using three different sensitivity indices: RBD-FAST \cite{tarantola2006random} (top row), PAWN \cite{pianosi2015simple} (middle row), and Delta moment-independent analysis \cite{plischke2013global} (bottom row). In each row, we compare one realization of the full high-fidelity data matrix consisting of 237 high-fidelity simulations with the multi-modal ID approximation computed from 25 high-fidelity simulations. Each parameter $\xi_i$ is defined in Table \ref{tab:uncertainties}. Error bars denote 95\% confidence intervals due to sampling of $\widehat{\bm H}$ in the multi-modal ID algorithm, as well as sampling within the sensitivity index algorithm (if applicable). The dominant parameter $\xi_6$ corresponds to the energy deposited by the laser kernel.}
    \label{fig:sens}

\end{figure}

\paragraph{Results and discussion}

In all subsequent results, we use a rank 25 approximation of the data matrices, with $N_S=5$ independent samples of each low-fidelity run. As shown in Figure \ref{fig:psaap:0}, this corresponds to a relative 2-norm error of approximately 5\% when reconstructing the low-fidelity data matrix.
The multi-modal ID algorithm cannot be evaluated using the same $W_1$ distance metric as the toy problems because we can no longer sample the true high-fidelity data matrix on demand. Instead, our final assessment of the proposed method is based on the following:
\begin{enumerate}
\item A visual comparison of a single prediction sample of the low-fidelity data matrix $\bm L$ and high-fidelity data matrix $\bm H$ for both multi-modal and baseline ID is shown in Figure \ref{fig:psaap_h_hat}. Each column of the data matrix is plotted separately. This illustrates qualitatively the failure of ID and that physically reasonable predictions are obtained with multi-modal ID.
\item We choose 5 random columns from the data matrices, and then compare the predictions from multi-modal ID and baseline ID in Figure \ref{fig:psaap:errobar}. For a given column (corresponding to a given model parameter value), 5 predictions of the high-fidelity pressure trace are sampled from each method, based on five samples of the low-fidelity data matrix. The black dotted line shows one sample from the high-fidelity model. Again, this demonstrates that the proposed algorithm regains physically plausible predictions, given stochastic and multi-modal data. However, note that in column 126 the predicted high-fidelity does not ignite. This reflects small differences in the ignition probability between the fidelities, which the proposed method does not correct.
\item In Figure \ref{fig:sens}, the results of a sensitivity analysis on the full 237 high-fidelity pressure traces are shown, compared to the same sensitivity analysis from the multi-modal ID approximation. The quantity of interest is the mean pressure, which is a proxy for whether the combustor ignites successfully or not. We employ three different sensitivity indices to indicate the robustness of this result. The parameter sensitivity correlation ranges from 0.70 (PAWN) to 0.90 (Delta), indicating that the bi-fidelity method yields very similar sensitivities. In all cases, the multi-modal ID and the full high-fidelity data both indicate that $\xi_6$, the energy deposited by the laser, is the dominant parameter in the ignition outcome of these trials.

\end{enumerate}

Overall, the baseline ID algorithm fails to provide reasonable pressure trace predictions on this problem. The pressure traces exhibit stochasticity and are strongly bimodal, corresponding to either an ignition or a non-ignition state. The multi-modal ID algorithm produces the expected bimodal predictions and obtains parameter sensitivities comparable to those of the full high-fidelity data. 

Some limitations remain, in particular, the assumption of a similar interpolation matrix between fidelities. In the present application, a small number of simulations ignite after a long delay ($>300\mu$s), and the randomness inherent in this process means there is limited correlation between fidelities. In addition, the proposed method does not yield a predictive distribution for the basis elements, because the predictions here revert to the single realized high-fidelity model output. Finally, the proposed method yields ignition probabilities based solely on the low-fidelity model samples. In applications for which the cluster probabilities are significantly different, the proposed method would not be appropriate.

\paragraph{Computational cost savings} Using a rank 25 approximation to 237 pressure traces, along with a low-fidelity model at half resolution (and therefore 16$\times$ faster in 3D), the bi-fidelity approximation has 0.16 the cost of running the entire high-fidelity model. The multi-modal ID algorithm itself has negligible cost and can be run on a laptop.

\section{Conclusions}

We have introduced the multi-modal ID method as a bi-fidelity approximation approach for stochastic and multi-modal data. This is an extension of the bi-fidelity ID method, which is based on sampling the low-fidelity model to guarantee agreement in cluster membership of the basis elements. We have demonstrated that the proposed method fixes the basis mismatch issue through numerical examples. The method is demonstrated on a sensitivity analysis for a laser-ignited rocket combustor system, where it can approximate a dataset of high-fidelity simulations for 16\% of the cost, while maintaining correlation (0.70--0.90) with the parameter sensitivities computed on the high-fidelity dataset.

In future work, the assumption of linearity in the reconstruction of each data matrix column from the basis columns could be relaxed using nonlinear methods.

\section*{Acknowledgments}

The authors acknowledge financial support from the US Department of Energy’s National Nuclear Security Administration via the Stanford PSAAP-III Center for the prediction of laser ignition of a rocket combustor (DE-NA0003968).

The authors also extend their gratitude to Dr.\ Gianluca Geracci, Prof.\ Gianluca Iaccarino, Dr.\ Marta D'Elia, and Dr.\ Juan Cardenas for helpful discussion over the course of this work.


\bibliographystyle{unsrtnat}
\bibliography{references}

newpage
\appendix

\section*{Notation}
\begin{description}
    \item[$d$] Dimension of the parametric input space
    \item[$M_L, M_H$] Dimension of the low- and high-fidelity QOI
    \item[$N$] Number of parametric input samples
    \item[$r$] Rank of the low-rank approximation
    \item[$K$] Number of modes in the mixture distribution
    \item[$N_S$] Number of low-fidelity samples per input for the stochastic case
    \item[$S$] Number of high-fidelity samples generated by the stochastic ID method

    \item[$\boldsymbol \xi \in \mathbb{R}^d$] Vector of parametric inputs
    \item[$\bm \omega$] Hidden random variables representing stochasticity
    \item[$\boldsymbol v_L(\boldsymbol\xi)$] Low-fidelity quantity of interest (QOI) vector
    \item[$\boldsymbol v_H(\boldsymbol\xi)$] High-fidelity quantity of interest (QOI) vector
    \item[$\chi$] Mode or cluster membership
    
    \item[$\mathcal{J}$] Index set of basis columns

    \item[$\bm{L} \in \mathbb{R}^{M_L \times N}$] Low-fidelity data matrix
    \item[$\bm{H} \in \mathbb{R}^{M_H \times N}$] High-fidelity data matrix
    \item[$\bm{L}_r, \bm{H}_r$] Low- and high-fidelity basis (skeleton) matrices
    \item[$\bm C_L \in \mathbb{R}^{r \times N}$] Interpolation coefficient matrix
    \item[$\widehat{\bm{L}} \in \mathbb{R}^{M_L \times N}$] Rank-$r$ ID approximation of $\bm{L}$
    \item[$\widehat{\bm{H}} \in \mathbb{R}^{M_H \times N}$] Bi-fidelity approximation of $\bm{H}$
\end{description}

\section{Deterministic bi-fidelity interpolative decomposition algorithm}
\label{app:deterministicid}

\begin{algorithm}[h]
\caption{Bi-fidelity interpolative decomposition for deterministic data.}
\label{alg:deterministicid}
\hrulefill

\KwIn{
    $\bm{L}$: LF data matrix \\
    \hspace{1cm} $\{\boldsymbol \xi_{j} \}_{j=1}^{N}$: Parametric inputs used in each column of $\bm{L}$ \\
    \hspace{1cm} $r$: Desired rank of the approximation \\
}

\KwOut{
    $\widehat{\bm H}$: Approximated HF data matrix \\
}

\hrulefill

\BlankLine
Step 1: Compute rank-$r$ interpolative decomposition: $\bm{L} = \bm{L}_r \bm{C}_L$

\BlankLine
Step 2: Extract basis column set $\mathcal{J}$, such that $\bm{L}_r = \bm{L}(:, \mathcal{J})$ \\

\BlankLine
Step 3: $\bm H_r \gets \left[ \boldsymbol v_H(\boldsymbol \xi_{\mathcal{J}[1]}) \ \cdots \ \boldsymbol v_H(\boldsymbol \xi_{\mathcal{J}[r]})  \right] $ \\

\BlankLine
Step 4: $\widehat{\bm H} \gets \bm H_r \bm{C}_L$ \\

\BlankLine
\textbf{Return} $\widehat{\bm H}$

\hrulefill
\end{algorithm}

\section{Bi-fidelity approximation error for linear, low-fidelity operator}
\label{app:linearop}

In this section, we remark on the accuracy of the standard deterministic ID algorithm when $\bm{L} = \bm{T}\bm{H}$, so the low-fidelity is obtained from applying a linear operator to the high-fidelity. One practical example of this is when low-fidelity data is obtained by convolving a Gaussian with the high-fidelity data. For this (non-stochastic) regime, we now derive some results on the high-fidelity approximation error.

First, assuming that $\mathrm{rank}(\bm{H})=r$, an exact rank-$r$ interpolative decomposition of $\bm{H}$ exists for some interpolation rule $\bm{C}_H$, i.e., $\bm{H} = \bm{H}_r \bm{C}_H $. Then, we show the following condition on $\bm{T}$ is required for the bi-fidelity approximation of $\bm{H}$ using the interpolation rule calculated using $\bm{L}$ to be exact.

\begin{proposition}
    Let $\bm{L} = \bm{T} \bm{H}$. If  $\mathrm{rank}(\bm{H})=r$, then the low and high-fidelity interpolation rules $\bm{C}_L = \bm{C}_H$ if and only if $\bm{L}_r$ has full column rank.
\end{proposition}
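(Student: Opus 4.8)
The plan is to reduce the statement to a uniqueness question for the linear system $\bm{L}_r \bm{C} = \bm{L}$ in the unknown coefficient matrix $\bm{C}$. First I would record the two structural facts that drive everything. Writing $\mathcal{J}$ for the basis index set, the low-fidelity skeleton factors through $\bm{T}$, namely $\bm{L}_r = \bm{L}(:,\mathcal{J}) = \bm{T}\bm{H}(:,\mathcal{J}) = \bm{T}\bm{H}_r$. Substituting the exact high-fidelity decomposition $\bm{H} = \bm{H}_r \bm{C}_H$ then gives $\bm{L} = \bm{T}\bm{H} = \bm{T}\bm{H}_r\bm{C}_H = \bm{L}_r\bm{C}_H$, so $\bm{C}_H$ is itself a valid interpolation rule for $\bm{L}$: it solves $\bm{L}_r\bm{C} = \bm{L}$. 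The question is therefore whether the rule $\bm{C}_L$ produced from the low-fidelity data coincides with this particular solution.

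For the ($\Leftarrow$) direction I would argue by uniqueness. If $\bm{L}_r$ has full column rank, its null space is trivial, so $\bm{L}_r\bm{C} = \bm{L}$ has at most one solution. Both $\bm{C}_L$ (by construction) and $\bm{C}_H$ (by the identity above) satisfy it, hence $\bm{C}_L = \bm{C}_H$; concretely, $\bm{L}_r(\bm{C}_L - \bm{C}_H) = \bm{0}$ forces every column of the difference into the kernel of $\bm{L}_r$, which is $\{\bm{0}\}$.

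For the ($\Rightarrow$) direction I would use the contrapositive together with the special structure of $\bm{C}_H$. The key observation is that, because the basis columns reconstruct themselves, $\bm{C}_H(:,\mathcal{J}) = \bm{I}_r$; in particular the columns of $\bm{C}_H$ contain all standard basis vectors $\bm{e}_1,\dots,\bm{e}_r$ and therefore span $\mathbb{R}^r$. Now suppose $\bm{L}_r$ is rank-deficient. Under the minimum-norm convention the coefficient matrix is $\bm{C}_L = \bm{L}_r^{+}\bm{L} = \bm{L}_r^{+}\bm{L}_r\bm{C}_H$, and every column of $\bm{L}_r^{+}\bm{L}$ lies in the range of $\bm{L}_r^{+}$, i.e.\ in the row space of $\bm{L}_r$, which is now a proper subspace of $\mathbb{R}^r$. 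Since at least one standard basis vector fails to lie in this proper subspace, some column of $\bm{C}_H$ cannot be matched, so $\bm{C}_L \neq \bm{C}_H$. Equivalently, $\bm{L}_r^{+}\bm{L}_r$ is the orthogonal projector onto the row space of $\bm{L}_r$, and equality $\bm{C}_L = \bm{C}_H$ would force this projector to fix all of $\mathbb{R}^r$ and hence equal $\bm{I}_r$, contradicting rank-deficiency.

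The main obstacle is making the ($\Rightarrow$) direction airtight, since when $\bm{L}_r$ is rank-deficient the system $\bm{L}_r\bm{C} = \bm{L}$ has infinitely many solutions and the conclusion depends on which one the algorithm selects. I would resolve this by committing to the minimum-norm (pseudoinverse) convention consistent with the implementation and leaning on the identity-block structure $\bm{C}_H(:,\mathcal{J}) = \bm{I}_r$; the fact that the columns of $\bm{C}_H$ span $\mathbb{R}^r$ is precisely what rules out the accidental coincidence $\bm{C}_L = \bm{C}_H$ that could otherwise occur in the rank-deficient case.
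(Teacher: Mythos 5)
Your proof is correct, and the forward direction ($\bm{L}_r$ full column rank $\Rightarrow$ $\bm{C}_L=\bm{C}_H$) is essentially the paper's argument: both of you establish $\bm{L}_r\bm{C}_L=\bm{L}_r\bm{C}_H$ from $\bm{L}_r=\bm{T}\bm{H}_r$ and cancel via injectivity of $\bm{L}_r$. Where you genuinely diverge is the converse. The paper only observes that when $\bm{L}_r$ is rank-deficient the system $\bm{L}_r\bm{C}=\bm{L}$ fails to determine $\bm{C}_L$ uniquely (and writes the slightly abusive $\bm{C}_L=\bm{C}_H+\bm{v}$ with $\bm{v}$ a null vector), which shows equality is not \emph{forced} but does not by itself show $\bm{C}_L\neq\bm{C}_H$ for the rule the algorithm actually returns. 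You close exactly that gap: by committing to the minimum-norm convention $\bm{C}_L=\bm{L}_r^{+}\bm{L}=\bm{L}_r^{+}\bm{L}_r\bm{C}_H$ and exploiting the identity block $\bm{C}_H(:,\mathcal{J})=\bm{I}_r$ (so the columns of $\bm{C}_H$ span $\mathbb{R}^r$), you show that equality would force the projector $\bm{L}_r^{+}\bm{L}_r$ to be $\bm{I}_r$, contradicting rank deficiency. This is a strictly sharper argument than the paper's, at the price of an extra hypothesis the paper leaves implicit (which solution the algorithm selects); note that under the paper's own QR-based construction the coefficient matrix is not even well defined when $\bm{R}_L^{(11)}$ is singular, so some such convention is unavoidable, and yours is a reasonable and explicitly stated choice. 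The one assumption you should flag is that $\bm{C}_H(:,\mathcal{J})=\bm{I}_r$ requires $\bm{H}_r=\bm{H}(:,\mathcal{J})$ to have full column rank, which is implied by the assumed existence of an exact rank-$r$ ID of $\bm{H}$ on the basis set chosen from $\bm{L}$.
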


\begin{proof}
    There exists $\bm{C}_H$ such that $\bm{H} = \bm{H}_r \bm{C}_H $. Furthermore, if $\bm{L}_r$ has full column rank then there exists $\bm{C}_L$ such that $ \bm{L} = \bm{L}_r \bm{C}_L $.
    Both fidelities share the same basis set, so $\bm{L}_r=\bm{T}\bm{H}_r$. Then:
    \begin{align*}
    \bm{L} &= \bm{T} \bm{H}, \\
        \bm{L}_r \bm{C}_L &= \bm{T} \bm{H}_r \bm{C}_H, \\
    \bm{L}_r \bm{C}_L &= \bm{L}_r \bm{C}_H.
    \end{align*}
    If $\bm{L}_r$ has full column rank, we can apply the left pseudo-inverse $\bm{L}_r^+ = (\bm{L}_r^T \bm{L}_r)^{-1} \bm{L}_r^T$:
    \begin{equation}
        \bm{C}_L = \bm{C}_H.
    \end{equation}
    Conversely, if $\bm{L}_r$ is rank-deficient, then there exists a nonzero null space component $\bm{v}$ such that:
    \begin{equation}
        \bm{L}_r \bm{v} = \bm{0}.
    \end{equation}
    In this case, the equation $\bm{L}_r \bm{C}_L = \bm{L}_r \bm{C}_H$ does not uniquely determine $\bm{C}_L$, as any solution satisfies:
    \begin{equation}
        \bm{C}_L = \bm{C}_H + \bm{v}.
    \end{equation}
    Thus, $\bm{C}_L = \bm{C}_H$ if and only if $\bm{L}_r$ has full column rank.
\end{proof}

A necessary (but not sufficient) condition for full column rank of $\bm{L}_r$, so that the exact relation above holds, is $\mathrm{rank}(\bm{T}) \ge \mathrm{rank}(\bm{H})$.

In the approximate case, $\mathrm{rank}(\bm{H}) > r$, and as before $\bm{L} = \bm{T}\bm{H}$. In this case, only an error bound can be provided. Similar to the analysis in \cite{hampton2018}, it is straightforward to show that the high-fidelity approximation error is bounded from below as

\begin{equation}
    \frac{ \| \widehat{\bm{L}} - \bm{L} \|_F}{ \|\bm{T} \|_2 } \le \| \widehat{\bm{H}} - \bm{H} \|_F .
\end{equation}
If $\bm{T}$ is invertible, then the bi-fidelity error is bound from above via
\begin{equation}
    \| \widehat{\bm{H}} - \bm{H} \|_F \le \| \bm{T}^{-1} \|_2 \| \widehat{\bm{L}} - \bm{L} \|_F.
\end{equation}
We can refine this bound further using the result from \cite{martinsson2011randomized} that $\| \widehat{\bm{L}} - \bm{L} \|_F \le \sqrt{r(N-r)+1} \sigma_{r+1}(\bm{L})$, where $r$ is the rank of the interpolative decomposition, $N$ is the number of columns of $\bm{L}$, and $\sigma_{r+1}(\bm{L})$ is the $(r+1)$-st greatest singular value of $\bm{L}$:
\begin{equation}
    \| \widehat{\bm{H}} - \bm{H} \|_F \le \| \bm{T}^{-1} \|_2 \sqrt{r(N-r)+1} \sigma_{r+1}(\bm{L}).
\end{equation}

\newpage

\section{Stochastic bi-fidelity interpolative decomposition algorithm}
\label{app:stochid}

\begin{algorithm}
\caption{Sample $S$ approximations of the HF data matrix using stochastic ID algorithm.}
\label{alg:1}

\hrulefill

\KwIn{
    $\{ \bm{L}^{(i)} \}_{i=1}^{N_S}$: Repeated samples of low-fidelity data matrix \\
    \hspace{1cm} $\{\boldsymbol \xi_{j} \}_{j=1}^{N}$: Parametric inputs used in each column of the $\bm{L}^{(i)}$ \\
    \hspace{1cm} $r$: Desired rank of the approximation
}
\KwOut{
    $\{ \widehat{\bm H}^{(i)} \}_{i=1}^{S}$: Repeated samples of the approximated high-fidelity data matrix \\
}

\hrulefill

\BlankLine

Step 1: Select basis column set $\mathcal{J}$

\BlankLine
Step 2: $\bm H_r \gets \left[ \boldsymbol v_H(\boldsymbol \xi_{\mathcal{J}[1]}) \ \cdots \ \boldsymbol v_H(\boldsymbol \xi_{\mathcal{J}[r]})  \right] $ \\

\BlankLine
Step 3: Classify cluster memberships $\chi$: \ $\chi^{(j)}_{H} \gets \argmax_\chi P(\chi | \boldsymbol v_H (\boldsymbol \xi_{j})) $ for $j \in \mathcal{J}$ \\

\BlankLine
Step 4: \For{$i \gets 1$ \textbf{to} $S$}{
    $\bm{L} \gets $ \texttt{resample}($\mathcal{J}, \bm{L}^{(1)}, \dots, \bm{L}^{(N_S)}, \chi^{(1)}_{H}, \dots, \chi^{(r)}_{H})$ \\
    $\bm{C}_L \gets \argmin_{ \bm C_L} \| \bm L - \bm L(:, \mathcal{J}) \bm C_L \|_F^2 + \lambda \| \bm C_L \|_F^2 $\\
    $\widehat{\bm{H}}^{(i)} \gets \bm{H}_r \bm{C}_L$ 
}

\textbf{Return} $\{ \widehat{\bm{H}}^{(i)} \}_{i=1}^{S}$

\hrulefill
\BlankLine

\end{algorithm}

\begin{algorithm}
\caption{\texttt{resample()} used in Algorithm \ref{alg:1} to sample a low-fidelity data matrix.}
\label{alg:2}

\hrulefill

\KwIn{
    $\mathcal{J}$: Index set of basis columns \\
    \hspace{1cm} $\{ \bm{L}^{(i)} \}_{i=1}^{N_S}$: Repeated samples of low-fidelity data matrix \\
    \hspace{1cm} $\{ \chi^{(i,j)}_{L} \}_{i=1,j=1}^{N, N_S}$: Cluster labels of low-fidelity data columns \\
    \hspace{1cm} $\{ \chi^{(j)}_{H} \}_{j \in \mathcal{J}}$: Cluster labels of high-fidelity basis columns \\
}

\KwOut{
    $\bm{L}$: Resampled LF data, with basis columns matching HF cluster labels
}

\hrulefill

\BlankLine

Step 1: Initialize the resampled LF data matrix $\bm{L} \in \mathbb{R}^{M_L \times N}$ \\

\BlankLine
Step 2: 
\For{$j \in \mathcal{J}$}{
    \BlankLine
    $\mathcal{I}_{\text{match}} \gets \{i \ | \ \chi^{(i,j)}_{L} = \chi^{(j)}_{H} \ \mathrm{for} \ i=1, 2, ..., N_S \}$ \\

    $k \gets \texttt{random.choice}(\mathcal{I}_{\text{match}})$ \\

    $\bm{L}(:, j) \gets \bm{L}^{(k)}(:, j)$ \\
}

\BlankLine
Step 3: 
\For{$j \notin \mathcal{J}$}{
    $k \gets \texttt{random.choice}(1, \dots, N_S)$ \\
    $\bm{L}(:, j) \gets \bm{L}^{(k)}(:, j)$ \\
}

\textbf{Return} $\bm{L}$

\hrulefill
\BlankLine

\end{algorithm}

\newpage

\section{Uncertainty distribution in laser-ignited combustor simulations}

\begin{table}[ht]
    \centering
    \begin{tabular*}{\linewidth}{@{\extracolsep{\fill}} l c c}
        \toprule
        \textbf{Uncertainty ID} & \textbf{Description} &  \textbf{Distribution}  \\
        \midrule
        $\xi_1\!: \Delta x_r$          &  Radial focal location  &  $N \sim (0.29 \, \text{mm}, 0.04 \, \text{mm}^2)$ \\
        $\xi_2\!: \Delta x_l$          &  Streamwise focal imprecision  &  $N \sim (-0.54 \, \text{mm}, 0.20 \, \text{mm}^2)$ \\
        $\xi_3\!: l_{axial}$           & Laser axial length   &  $U \sim [1.44 \text{mm}, 2.16 \text{mm}]$ \\
        $\xi_4\!: \alpha = l_{axial}/2R1$      & Aspect ratio   &  $U \sim [2.0, 2.5]$ \\
        $\xi_5\!: \beta = R1/R2$       & Lobe radii ratio   &  $U \sim [1.1, 2.1]$ \\
        $\xi_6\!: E$                   & Energy deposited   & $U \sim [20 \, \text{mJ}, 54 \, \text{mJ}]$ \\
        $\xi_7\!: TF_{\beta}$          & Thickened flame model parameter & $U \sim [0.5,0.60]^*$ \\
        $\xi_8\!: TF_{SL,0}$           & Thickened flame model laminar flame speed & $U \sim [0.0098,0.011]$  \\ 
        $\xi_{9}\!: \dot{\omega}$     & Reaction rate & $U \sim [3.4 \times 10^9,3.8 \times10^9]$ \\
        $\xi_{10}\!: C{s}$             & Smagorinsky constant & $U \sim [0.15,0.17]$ \\
        $\xi_{11}\!: \dot{m}_{ox}$     & Mass flow rate oxygen &  $U \sim [6.12 \text{g/s},6.83 \text{g/s}]$ \\
        $\xi_{12}\!: \dot{m}_{fuel}$   & Mass flow rate fuel   & $U \sim [1.97 \text{g/s}, 2.17 \text{g/s}]$ \\
        \bottomrule
    \end{tabular*}
    \caption{Summary of the uncertainty parameters $\bm\xi$ used in the uncertainty quantification of the laser-ignited rocket combustor test case. Probability distributions are denoted as $U \sim [a, b]$ (uniform probability between $a$ and $b$) or $N \sim (\mu, \sigma^2)$ (normal distribution with mean $\mu$ and variance $\sigma^2$).}
    \label{tab:uncertainties}
\end{table}

\end{document}